\theoremstyle{plain}
\newtheorem{theorem}{Theorem}
\newtheorem{proposition}{Proposition}
\newtheorem{corollary}{Corollary}
\newtheorem{lemma}[theorem]{Lemma}
\theoremstyle{definition}
\newtheorem{definition}{Definition}
\newtheorem{example}{Example}
\theoremstyle{remark}
\newtheorem{remark}{Remark}
\begin{document}

\title[Anti-holomorphic involutions and spherical subgroups]
{Anti-holomorphic involutions and spherical subgroups of reductive groups}

\author{St\'ephanie Cupit-Foutou}
\thanks{This research was funded  by the SFB/TR 12 of the German Research Foundation (DFG) and partially by the DFG priority program SPP 1388-Darstellungstheorie}
\email {Stephanie.Cupit@rub.de}


\maketitle

\begin{abstract}
We study the action of an anti-holomorphic involution $\sigma$ of a connected reductive complex algebraic group $G$ on the set of spherical subgroups of $G$.
The results are applied to $\sigma$-equivariant real structures on spherical homogeneous $G$-spaces admitting a wonderful embedding.
Using combinatorial invariants of these varieties, we give an existence and uniqueness criterion for such real structures.
We also investigate the associated real parts of the wonderful varieties.
\end{abstract}

\section*{Introduction}

The classification of anti-holomorphic involutions of connected reductive complex algebraic groups $G$ is well-established.
In this article, we study the behavior of certain subgroups of $G$ under these mappings.
More precisely, we are interested in the subgroups of $G$ called spherical -
groups including e.g. parabolic and symmetric subgroups of $G$.
The fixed point sets of these subgroups w.r.t. such involutions naturally provide examples of real spherical spaces.

Spherical subgroups of a given connected reductive group can be classified by means of combinatorial objects (see~\cite{Lu01,Lo,BP,CF}).
In terms of these invariants, we establish
existence and uniqueness criteria about anti-holomorphic involutions on spherical homogeneous spaces, extending results obtained in~\cite{ACF,Ak2}, 
in particular by considering a wider class of homogeneous $G$-spaces.

To any spherical subgroup of $G$, one can assign its so-called spherical closure, a spherical subgroup of $G$ canonically defined and whose associated homogeneous
space has a wonderful embedding.
As proved by Luna in~\cite{Lu01}, spherically closed subgroups of $G$ play a central role in the classification of all spherical subgroups of $G$.
Moreover, for such subgroups, the aforementioned combinatorial classification is really convenient since the involved objects are built on the Dynkin diagram of $G$.
After having proved that this class of subgroups $H$ of $G$ is preserved by any anti-holomorphic involution $\sigma$ of $G$, we study qualitative and quantitative
properties of so-called $\sigma$-equivariant real structures on $G/H$ and their wonderful compactifications.

Like in~\cite{Ak2}, a peculiar automorphism of Dynkin diagram plays a central role in the present work; we recall its definition and properties in Section 1 as well
as the notion of Cartan index that will be needed for quantitative results.

Once the basic material concerning spherical subgroups of $G$ and their invariants is set up,
we study, in Section 2, properties of $\sigma$-conjugates of spherical subgroups of $G$.
Theorem~\ref{conjugationcriterion} enables us to decide whether a spherical subgroup $H$ of $G$ is conjugate to $\sigma(H)$;
if the latter happens to hold, we are able to prove
an existence and uniqueness statement concerning $\sigma$-equivariant real structures on $G/H$; see Corollary~\ref{uniquenessspherclosed}.

In Section 3, we investigate how $\sigma$-equivariant real structures are carried over through geometrical operations like Cartesian products, parabolic inductions.
 
In the last section, we focus on the so-called wonderful compactifications $X$ of spherically closed homogeneous spaces.
Theorem~\ref{criterionwithsphericalsystem} provides a criterion for a $\sigma$-equivariant real structure to exist on $X$.
As an application of this result, we obtain, in particular, that (almost) all primitive self-normalizing spherical subgroups $H$ of $G$ 
are conjugate to $\sigma(H)$; see Theorem~\ref{conjugateprimitive} and Remark~\ref{remark-strictw'ful}.
Finally, we study the real parts of wonderful varieties equipped with a $\sigma$-equivariant real structure;
we give several conditions for the existence of real points (Theorem~\ref{Thm-reallocus} and Proposition~\ref{pptyS})
and we conclude our work by presenting a few examples illustrating how these real loci can be diverse and various.




\bigbreak\noindent
\paragraph{\textit{Acknowledgment.}}
I am very grateful to D. N. Akhiezer for stimulating and helpful discussions.
This work was accomplished while D. N. Akhiezer, supported by the SPP-1388 Darstellungstheorie, was visiting Bochum Ruhr-Universit\"at in Winter 2014.
I thank also the referees for their comments which helped me to improve the organization of this paper.

\subsection{Notation and terminology}
Let $G$ be a complex semisimple group and $\sigma$ be an anti-holomorphic involution of $G$.

We fix a Borel subgroup $B$ of $G$ and we choose a maximal torus $T\subset B$ stable by $\sigma$.
We denote the related set of simple roots by $S$ and the Weyl group $N_G(T)/T$ by $W$.
Let $\mathcal X(T)$ be the character group of $T$. Then $\sigma$ defines an automorphism $\sigma^\top$ on $\mathcal{X}(T)$
while setting
$$
\sigma^\top(\chi)=\overline{\chi\circ\sigma}\qquad\mbox{ for $\chi \in\mathcal X(T)$.}
$$
Given a representation $(V,\rho)$ of $G$, we denote the corresponding $\sigma$-twisted module by $V^\sigma$.
Specifically, $V^\sigma$ is given by the complex conjugate $\overline{V}$ of $V$ equipped with the $G$-module structure
$$
g\mapsto\overline{\rho(\sigma(g))}\qquad\mbox{for any $g\in G$}.
$$

In this paper, we are concerned with anti-holomorphic involutions (also called real structures) on algebraic varieties.
Given a real structure $\mu$ on an algebraic variety $X$, its corresponding real part $X^\mu$ is defined as follows:
$$
X^{\mu}=\{x\in X: \mu(x)=x\}.
$$

Let $X$ be a $G$-variety equipped with a real structure $\mu$.
The mapping $\mu$ is called \emph{$\sigma$-equivariant} if
$$
\mu(gx)=\sigma(g)\mu(x)\quad\mbox{ for all $(g,x)\in G\times X$}.
$$

\section{Dynkin diagram automorphism associated to $\sigma$}\label{recallsauto}
To the involution $\sigma$, we can attach a partition $S=S_0\cup S_1$ as well as  an involutive map $\omega:S_1\rightarrow S_1$.
Specifically, the elements of $S_0$ (resp. $S_1$) correspond to the black (resp. white) circles of the Satake diagram associated to $\sigma$
whereas  $\alpha$ and $\omega(\alpha)$ for $\alpha\in S_0$ are the vertices of a bi-oriented edge of this diagram.  

Consider the subgroup of $W$ generated by the simple reflections associated to the elements of $S_0$
and let $w_\bullet$ denote its element of maximal length.
Following~\cite{Ak}, we set
$$
\varepsilon_\sigma( \alpha)=\left\{
	\begin{array}{ll}
		\omega(\alpha)  & \mbox{if } \alpha\in S_1 \\
		-w_\bullet(\alpha) & \mbox{if } \alpha\in S_0
	\end{array}.
\right.
$$

\begin{theorem}[\cite{Ak}]\label{autoAkhiezer}
\smallbreak\noindent
{\rm (i)}\enspace 
The map $\varepsilon_\sigma$ is an automorphism of $S$.
Further, it is induced by a self-map on $\mathcal X(T)$ (still denoted by $\varepsilon_\sigma$).
\smallbreak\noindent
{\rm (ii)}\enspace
If $n_\bullet\in N_G(T)$ represents $w_\bullet$ then $n_\bullet\sigma(B)n_\bullet^{-1} =B$.
\smallbreak\noindent
{\rm (iii)}\enspace
If $V$ is a simple $G$-module of highest weight $\lambda$ then  $V^\sigma$ is also a simple $G$-module; its highest weight equals $\varepsilon_\sigma(\lambda)$.
Further,
$$
\varepsilon_\sigma(\lambda)=w_\bullet(\sigma^\top(\lambda)).
$$
\end{theorem}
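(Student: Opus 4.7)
The strategy is to define $\varepsilon_\sigma$ as $w_\bullet\circ\sigma^\top$ directly on the whole character group $\mathcal X(T)$ -- where it is manifestly an automorphism, since both $\sigma^\top$ and the $W$-action are -- and then verify that its restriction to $S$ agrees with the piecewise definition in the theorem. Once this identification is in hand, (i) is immediate (including the extension-to-$\mathcal X(T)$ assertion, which is built into the definition), while (ii), (iii) each reduce to a short argument.

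The key combinatorial step, and the main obstacle, is the restriction-to-$S$ verification. The relevant input is the defining data of the Satake diagram $(S_0,S_1,\omega)$: namely, $\sigma^\top(\alpha)=-\alpha$ for $\alpha\in S_0$, and $\sigma^\top(\alpha)\equiv\omega(\alpha)\pmod{\mathbb Z S_0}$ for $\alpha\in S_1$. The first of these immediately implies that $\sigma^\top$ stabilises $\Phi_{S_0}$ and hence normalises $W_{S_0}\subset W$, a fact used repeatedly. Applying $w_\bullet\in W_{S_0}$ to both congruences, and invoking that $-w_\bullet$ is a diagram automorphism of the Dynkin subdiagram on $S_0$, yields $w_\bullet\sigma^\top(\alpha)=-w_\bullet(\alpha)\in S_0$ and $w_\bullet\sigma^\top(\alpha)=\omega(\alpha)\in S_1$ respectively -- exactly the piecewise formula for $\varepsilon_\sigma$.

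Part (ii) is then a quick check. Since $\sigma(\mathfrak g_\alpha)=\mathfrak g_{\sigma^\top(\alpha)}$, the Borel $\sigma(B)$ has positive-root set $\sigma^\top(\Phi^+)$; conjugating by $n_\bullet$, the positive roots of $n_\bullet\sigma(B)n_\bullet^{-1}$ are $w_\bullet\sigma^\top(\Phi^+)=\varepsilon_\sigma(\Phi^+)$, which equals $\Phi^+$ because $\varepsilon_\sigma$ is a diagram automorphism. Two Borel subgroups containing $T$ with the same positive-root set coincide, so $n_\bullet\sigma(B)n_\bullet^{-1}=B$.

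For (iii), I would locate a highest weight vector of $V^\sigma=\overline V$ explicitly: a vector $\bar v$ is $U$-fixed in $V^\sigma$ precisely when $v\in V$ is fixed by $\sigma(U)=n_\bullet^{-1}Un_\bullet$ (using (ii)), which forces $\rho(n_\bullet)v\in V^U=\mathbb C v_\lambda$ by irreducibility of $V$; take $v=\rho(n_\bullet^{-1})v_\lambda$, of $T$-weight $w_\bullet(\lambda)$ in $V$. A direct unwinding of $\sigma^\top(\chi)(t)=\overline{\chi(\sigma(t))}$ shows $\bar v$ has $T$-weight $\sigma^\top(w_\bullet(\lambda))$ in $V^\sigma$. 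The stated formula $\varepsilon_\sigma(\lambda)=w_\bullet\sigma^\top(\lambda)$ then amounts to the commutation $\sigma^\top w_\bullet=w_\bullet\sigma^\top$, which holds because $\sigma^\top$ normalises $W_{S_0}$ and therefore fixes its unique longest element. Simplicity of $V^\sigma$ follows from the bijection, induced by complex conjugation, between $G$-invariant subspaces of $V$ and those of $V^\sigma$.
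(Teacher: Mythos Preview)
The paper does not prove this theorem at all: its entire proof reads ``See more precisely Theorem~3.1 and its proof in loc.\ cit.'' (referring to \cite{Ak}). Your proposal, by contrast, supplies a self-contained argument along the standard lines, and it is essentially correct.

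One step deserves a little more care. In the verification of (i) for $\alpha\in S_1$, you pass from the congruence $\sigma^\top(\alpha)\equiv\omega(\alpha)\pmod{\mathbb Z S_0}$ to the \emph{equality} $w_\bullet\sigma^\top(\alpha)=\omega(\alpha)$ just by ``applying $w_\bullet$''. But applying $w_\bullet$ only gives another congruence modulo $\mathbb Z S_0$; to force equality you must already know that $w_\bullet\sigma^\top(\alpha)$ lies in $S$. The clean fix is to first prove directly that $w_\bullet\sigma^\top(\Phi^+)=\Phi^+$: from the Satake setup (the ``adapted'' choice of $T$ and $B$), $\sigma^\top$ sends $\Phi_{S_0}^+$ to $\Phi_{S_0}^-$ (being $-\mathrm{id}$ on $\mathbb Z S_0$) and preserves the sign of every root outside $\Phi_{S_0}$, while $w_\bullet$ swaps $\Phi_{S_0}^{\pm}$ and fixes the sign outside $\Phi_{S_0}$. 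Hence $w_\bullet\sigma^\top$ preserves $\Phi^+$ and therefore $S$; your congruence then pins down $w_\bullet\sigma^\top(\alpha)=\omega(\alpha)$. Note that establishing $w_\bullet\sigma^\top(\Phi^+)=\Phi^+$ in this order also removes any appearance of circularity in your proof of (ii), where you currently invoke (i) to get $\varepsilon_\sigma(\Phi^+)=\Phi^+$.

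Your arguments for (ii) and (iii) are fine; in particular the commutation $\sigma^\top w_\bullet=w_\bullet\sigma^\top$ follows as you say, since $\sigma^\top s_\alpha=s_{\sigma^\top(\alpha)}\sigma^\top=s_{-\alpha}\sigma^\top=s_\alpha\sigma^\top$ for each $\alpha\in S_0$.
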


\begin{proof}
See more precisely Theorem 3.1 and its proof in loc. cit..
\end{proof}

Let $V$ be a $\sigma$-self-conjugate simple $G$-module $V$ of highest weight $\lambda$ that is, $\varepsilon_\sigma(\lambda)=\lambda$.
Then there exists a anti-linear automorphism $\nu: V\rightarrow V$.
Further, $\nu^2=c\, \mathrm{Id}_V$ with $c\in\mathbb R^\times$ and the sign of $c$ does not depend on $\nu$; see e.g. Proposition 8.2 in~\cite{O}.
The sign of $c$ is called the Cartan index of $V$. 

\section{$\sigma$-conjugates of spherical subgroups of $G$}

First, let us recall that a normal $G$-variety is said to be \emph{spherical} if it has an open $B$-orbit.
Note that the notion of sphericity makes sense also when $G$ is connected and  reductive (non necessarily semisimple).
Analogously, a subgroup $H$ of $G$ is called spherical if the $G$-variety $G/H$ is spherical.

Parallel to the complex case, there is the notion of a real spherical variety for real semisimple Lie groups $G_{\mathbb R}$.
A normal real algebraic $G_{\mathbb R}$-variety is called \emph{real spherical} when a minimal parabolic subgroup of $G_{\mathbb R}$ has an open orbit on it.
Whenever it is not empty, the real part of a spherical $G$-variety w.r.t. a $\sigma$-equivariant real structure is an example of a real spherical $G^\sigma$-variety.

In the following, $H$ denotes a spherical subgroup of $G$. Without loss of generality, we assume that $BH$ is open in $G$.

Set
\begin{equation}
\mu_\sigma:	G/H\longrightarrow G/\sigma(H), \quad
 gH \longmapsto \sigma(g)\sigma(H).
\end{equation}

Note that $G/\sigma(H)$ is spherical since so is $G/H$.

\subsection{}\label{anysphericalgp}
Following~\cite{LV}, to $G/H$ we attach three combinatorial invariants (so called Luna-Vust invariants):
its set of colors $\mathcal D=\mathcal D (G/H)$, its weight lattice $\mathcal{X}=\mathcal{X}(G/H)$ and its valuation cone $\mathcal V=\mathcal V(G/H)$.
The set of  colors $\mathcal D=\mathcal D (G/H)$ is the set of $B$-stable prime divisors of $G/H$;
the lattice $\mathcal{X}$ consists of the $B$-weights of the function field $\mathbb C(G/H)$ of $G/H$;
the valuation cone $\mathcal V$ is the set of $G$-invariant $\mathbb Q$-valued valuations of $\mathbb C(G/H)$.

Any valuation $v$ defines a homomorphism
$$
 \mathbb C(G/H)\longrightarrow\mathbb Q,\quad \rho: f\longmapsto v(f)
$$
and in turn $v$ induces an element $\rho_v$ of $V:=\mathrm{Hom}(\mathcal X(G/H),\mathbb Q)$; see loc. cit. for details.
This yields in particular two maps: 
$$
\mathcal V\longrightarrow V, \quad v\longmapsto \rho_v \quad\mbox{ and } \qquad \mathcal D\longrightarrow V, \quad D\longmapsto \rho_D
$$
where $\rho_D:=\rho_{v_D}$ and $v_D$ is the valuation of the divisor $D$.

The first map happens to be injective hence we can regard $\mathcal{V}$ as a subset of $V$.
The second map may not be injective; the set $\mathcal D$ is thus equipped with the map $\mathcal D\rightarrow V$ together with an additional map
$D\mapsto G_D$ with $G_D\subset G$ being the stabilizer of the color $D$.
By $\mathcal D(G/H_1)=\mathcal D(G/H_2)$, we  just mean that there is a bijection $\varphi:\mathcal D(G/H_1)\rightarrow\mathcal D(G/H_2)$
such that $\rho_D=\rho_{\varphi(D)}$ and $G_D=G_{\varphi(D)}$ for every $D\in\mathcal D(G/H_1)$.

A spherical homogeneous space is uniquely determined (up to $G$-isomorphism) by its Luna-Vust invariants; see  Losev's results in~\cite{Lo}.
In case $G/H$ is affine, these three invariants can be replaced by a single one: the weight lattice $\Gamma=\Gamma(G/H)$, that is  the set given by the highest weights 
of the coordinate ring of $G/H$ considered as a $G$-module; see again~\cite{Lo}.

\subsection{} Thanks to~\cite{Ak2}\footnote{See also~\cite{Hu} for analogy.} the relations between the Luna-Vust invariants of $G/H$ and those of $G/\sigma(H)$ are well-understood.
Specifically, we have the following description involving the automorphism $\varepsilon_\sigma$ of $S$ (see Section~\ref{recallsauto} for recollection of its definition). 

\begin{lemma} \label{Akhiezerlemma}
If $H$ is a spherical subgroup of $G$ then 
\begin{enumerate}
\item $\mathcal X\left(G/\sigma(H)\right)=\varepsilon_\sigma(\mathcal{X})$,
 \item $\mathcal{V}(G/\sigma(H))=\varepsilon_\sigma(\mathcal{V})$ and
\item $\mathcal D\left(G/\sigma(H)\right)=\left\{\mu_\sigma(n_\bullet D): D\in\mathcal{D}\right\}$
equipped with the maps 
$$
\mu_\sigma(n_\bullet D)\mapsto\varepsilon_\sigma(\rho_D) \quad \mbox{ and }\quad
\mu_\sigma(n_\bullet D)\mapsto n_\bullet\sigma(G_D)n_\bullet^{-1}
$$
with $n_\bullet$ being a representative in $N_G(T)$ of $w_\bullet$.
\item
If $H$ is also reductive then $\Gamma\left(G/\sigma(H)\right)=\varepsilon_\sigma(\Gamma)$.
\end{enumerate}

\end{lemma}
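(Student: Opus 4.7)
The plan is to transport Luna--Vust invariants along a variant of $\mu_\sigma$ adapted to $B$. I would introduce the antiholomorphic map $\tilde\mu:G/H\to G/\sigma(H)$, $gH\mapsto n_\bullet\sigma(g)\sigma(H)$; it is equivariant with respect to the automorphism $\tau=\mathrm{Ad}(n_\bullet)\circ\sigma$ of $G$, which preserves $B$ by Theorem~\ref{autoAkhiezer}(ii). Hence $\tilde\mu$ carries the open $B$-orbit through $eH$ to the open $B$-orbit through $n_\bullet\sigma(H)$, and sends $B$-stable subsets to $B$-stable subsets. A preliminary observation needed below is that $\sigma(n_\bullet)\in Tn_\bullet$: since $W(S_0)$ is $\sigma$-stable (as $S_0$ is read off the Satake diagram of $\sigma$) and $w_\bullet$ is the unique element of maximal length in $W(S_0)$, $\sigma$ fixes $w_\bullet$ in $W$, and lifting to $N_G(T)$ gives $\sigma(n_\bullet)=tn_\bullet$ for some $t\in T$. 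Consequently $\mu_\sigma(n_\bullet D)=\sigma(n_\bullet)\mu_\sigma(D)=t\cdot\tilde\mu(D)=\tilde\mu(D)$ as $B$-stable subsets of $G/\sigma(H)$, reconciling the statement's description in~(3) with $\tilde\mu(D)$.

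For~(1): a $B$-eigenfunction $f$ on $G/\sigma(H)$ of weight $\chi$ pulls back to $\tilde\mu^*f=\overline{f\circ\tilde\mu}$ on $G/H$; the $\tau$-equivariance yields $\tilde\mu^*f(bx)=\overline{\chi(\tau(b))}\,\tilde\mu^*f(x)$ for $b\in B$. Since $\tau(t)=w_\bullet(\sigma(t))$ on $T$, the induced $B$-weight is $\sigma^\top(w_\bullet\chi)$, so the resulting bijection $\mathcal{X}(G/\sigma(H))\to\mathcal{X}$ inverts to $\varepsilon_\sigma=w_\bullet\circ\sigma^\top:\mathcal{X}\to\mathcal{X}(G/\sigma(H))$, yielding $\mathcal{X}(G/\sigma(H))=\varepsilon_\sigma(\mathcal{X})$. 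For~(2), any $G$-invariant $\mathbb{Q}$-valuation $v$ on $\mathbb{C}(G/H)$ transports to the $G$-invariant valuation $v\circ\tilde\mu^*$ on $\mathbb{C}(G/\sigma(H))$ (invariance is preserved because $\tilde\mu^*$ intertwines the $G$-actions through $\tau$), and the corresponding identification on dual spaces is the one induced by $\varepsilon_\sigma$ via~(1), giving $\mathcal{V}(G/\sigma(H))=\varepsilon_\sigma(\mathcal{V})$ as well as the identity $\rho_{\tilde\mu(D)}=\varepsilon_\sigma(\rho_D)$. The stabilizer statement in~(3) is immediate from the $\tau$-equivariance: $g'\cdot\tilde\mu(D)=\tilde\mu(\tau^{-1}(g')D)$, so $G_{\tilde\mu(D)}=\tau(G_D)=n_\bullet\sigma(G_D)n_\bullet^{-1}$.

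For~(4): when $H$ is reductive, $\mathbb{C}[G/\sigma(H)]=\bigoplus_{\lambda'\in\Gamma(G/\sigma(H))}V(\lambda')$ decomposes multiplicity-freely as a $G$-module. The antilinear map $\mu_\sigma^*:\mathbb{C}[G/\sigma(H)]\to\mathbb{C}[G/H]$ sends each simple summand $V(\lambda')$ to a $G$-submodule of $\mathbb{C}[G/H]$ which, accounting for the antilinearity, is isomorphic to the $\sigma$-twist $V(\lambda')^\sigma$. By Theorem~\ref{autoAkhiezer}(iii), $V(\lambda')^\sigma\cong V(\varepsilon_\sigma(\lambda'))$; therefore $\varepsilon_\sigma(\Gamma(G/\sigma(H)))\subseteq\Gamma$, and symmetry gives equality.

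The most delicate step is the bookkeeping in~(1): verifying that the weight transformation under pullback along $\tilde\mu$ is exactly $\varepsilon_\sigma=w_\bullet\circ\sigma^\top$, and reconciling the statement's formula $\mu_\sigma(n_\bullet D)$ with the more natural $\tilde\mu(D)$. Both rely on the fact that $\sigma$ fixes $w_\bullet$ in~$W$.
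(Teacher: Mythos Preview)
Your argument is correct. The paper does not prove this lemma in a self-contained way: it simply cites Propositions~5.2--5.4 of \cite{Ak2} for parts (1)--(3) and Theorem~2.1 of \cite{Ak} for part~(4). What you have written is essentially the content of those references, so the approach is the same as the one the paper invokes, only made explicit. The central device---replacing $\mu_\sigma$ by the twisted map $\tilde\mu(gH)=n_\bullet\sigma(g)\sigma(H)$, equivariant for $\tau=\mathrm{Ad}(n_\bullet)\circ\sigma$ and hence compatible with $B$---is precisely Akhiezer's, and your transport of $B$-weights, $G$-invariant valuations, and colour stabilizers follows his line. Your reconciliation of $\mu_\sigma(n_\bullet D)$ with $\tilde\mu(D)$ via $\sigma(n_\bullet)\in Tn_\bullet$ is a helpful clarification of why the statement is phrased as it is; note that in fact $\sigma^\top(\alpha)=-\alpha$ for every $\alpha\in S_0$, so $\sigma$ fixes each generator $s_\alpha$ of $W(S_0)$ and hence $w_\bullet$ itself, which slightly strengthens your justification.
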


\begin{proof}
The three first assertions stem from Proposition 5.2, Proposition 5.3 and Proposition 5.4 in~\cite{Ak2} resp. 
whereas the fourth one follows from Theorem 2.1 in~\cite{Ak} 
(see also the first paragraph of the proof of Theorem 6.1 therein).
\end{proof}

\begin{definition}
The set $\mathcal D$ of colors of a spherical homogeneous space $G/H$ is called \emph{$\varepsilon_\sigma$-stable} if
for every $D\in\mathcal{D}$, there exists $D'\in\mathcal{D}$ (depending on $D$) such that 
$$
\varepsilon_\sigma(\rho_D)=\rho_{D'}\quad \mbox{ and } \quad n_\bullet\sigma(G_D)n_\bullet^{-1}=G_{D'}.
$$

\end{definition}

\begin{theorem}\label{conjugationcriterion}
Let $H$ be a spherical subgroup of $G$.
\begin{enumerate}
 \item 
The subgroups $H$ and $\sigma(H)$ of $G$ are conjugate if and only if 
the combinatorial invariants of $G/H$ are $\varepsilon_\sigma$-stable.
\item 
In case $G/H$ is affine, the subgroups $H$ and $\sigma(H)$ of $G$ are conjugate if and only if  the weight monoid of $G/H$ is $\varepsilon_\sigma$-stable.
\end{enumerate}
\end{theorem}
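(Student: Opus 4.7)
\medbreak\noindent
\textbf{Proof proposal.}
The plan is to reduce both statements to the classification theorems of Losev (\cite{Lo}): a spherical homogeneous space of $G$ is determined up to $G$-equivariant isomorphism by its Luna--Vust invariants $(\mathcal X,\mathcal V,\mathcal D)$, and in the affine case by its weight monoid $\Gamma$. Once this reduction is in place, the theorem becomes a direct consequence of Lemma~\ref{Akhiezerlemma}.

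First I would verify the elementary equivalence that $H$ and $\sigma(H)$ are conjugate in $G$ if and only if the $G$-varieties $G/H$ and $G/\sigma(H)$ are $G$-equivariantly isomorphic. One direction is immediate from $g\mapsto g^{-1}$-translation: if $\sigma(H)=gHg^{-1}$, then $xH\mapsto xg^{-1}\sigma(H)$ is a $G$-isomorphism. Conversely, a $G$-isomorphism sends the stabilizer of a base point to the stabilizer of its image, which gives the required conjugation.

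Next, I would apply Losev's theorem to rephrase the existence of such a $G$-isomorphism: it holds if and only if $(\mathcal X(G/H),\mathcal V(G/H),\mathcal D(G/H))$ and $(\mathcal X(G/\sigma(H)),\mathcal V(G/\sigma(H)),\mathcal D(G/\sigma(H)))$ coincide in the sense of Section~\ref{anysphericalgp} (equal lattices, equal valuation cones, and an identification of the color sets compatible with both the maps $D\mapsto\rho_D$ and $D\mapsto G_D$). I would then plug in the three identities from Lemma~\ref{Akhiezerlemma}(1)--(3):
\begin{align*}
\mathcal X(G/\sigma(H))&=\varepsilon_\sigma(\mathcal X(G/H)),\\
\mathcal V(G/\sigma(H))&=\varepsilon_\sigma(\mathcal V(G/H)),\\
\mathcal D(G/\sigma(H))&=\{\mu_\sigma(n_\bullet D):D\in\mathcal D(G/H)\},
\end{align*}
with the corresponding maps being $\mu_\sigma(n_\bullet D)\mapsto\varepsilon_\sigma(\rho_D)$ and $\mu_\sigma(n_\bullet D)\mapsto n_\bullet\sigma(G_D)n_\bullet^{-1}$. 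Equality of the Luna--Vust data for $G/H$ and $G/\sigma(H)$ then translates precisely into $\varepsilon_\sigma(\mathcal X)=\mathcal X$, $\varepsilon_\sigma(\mathcal V)=\mathcal V$, and the color-set condition in the definition of $\varepsilon_\sigma$-stability (one reads off that the bijection $\varphi:D\mapsto D'$ required by $\varepsilon_\sigma$-stability of $\mathcal D$ is exactly the composite $\mathcal D(G/H)\to\mathcal D(G/\sigma(H))=\mathcal D(G/H)$).

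For part~(2), the argument is the same but shorter: affine spherical homogeneous spaces are classified (again by Losev) by their weight monoid $\Gamma$, and Lemma~\ref{Akhiezerlemma}(4) gives $\Gamma(G/\sigma(H))=\varepsilon_\sigma(\Gamma(G/H))$, so that $G/H\cong G/\sigma(H)$ becomes $\varepsilon_\sigma(\Gamma)=\Gamma$. The only genuine subtleties, and the step I expect to require the most care, are making sure that the identifications involving $n_\bullet$ in Lemma~\ref{Akhiezerlemma}(3) really do amount to the symmetric condition in the definition of $\varepsilon_\sigma$-stability (so that the existence of the bijection $\varphi$ is both necessary and sufficient), and citing the correct version of Losev's uniqueness results, since the paper uses a formulation tailored to the combinatorial data of Section~\ref{anysphericalgp} rather than the original one.
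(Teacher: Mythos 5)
Your proposal is correct and follows essentially the same route as the paper: reduce to Losev's uniqueness results and then apply Lemma~\ref{Akhiezerlemma} to identify the Luna--Vust invariants (resp.\ the weight monoid) of $G/\sigma(H)$ with the $\varepsilon_\sigma$-images of those of $G/H$. The paper's own proof is just a terser version of this, leaving implicit the elementary equivalence between conjugacy of $H$ and $\sigma(H)$ and $G$-isomorphism of the homogeneous spaces that you spell out.
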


\begin{proof}
Thanks to the aforementioned Losev's results, it suffices to prove that $\mathcal X=\mathcal X(G/\sigma(H))$, $\mathcal V=\mathcal V(G/\sigma(H))$ and
$\mathcal D=\mathcal{D}(G/\sigma(H))$ (in the sense recalled above).
The required equalities are thus given by our assumption of $\varepsilon_\sigma$-stability together with Lemma~\ref{Akhiezerlemma}. 
\end{proof}

\begin{example}
Let $H=B^-$ with $B^-$ being the Borel subgroup of $G$  opposite to $B$.
First, recall that since $T$ is $\sigma$-stable, $\sigma(B^-)$ is conjugate to $B^-$.
Secondly, $\mathcal V=\mathcal X=\{0\}$ and $\mathcal D=\{B s_\alpha B^-/B^-:\alpha\in S \}$ with $s_\alpha\in W$ being the simple reflection associated to $\alpha$.
These invariants are clearly $\varepsilon_\sigma$-stable; note that $\mathcal D$ may not be fixed by $\varepsilon_\sigma$ (for instance in the case of the quasi-split but non-split real form 
in type $\mathsf A$).
\end{example}

\begin{remark}
In Proposition 5.4 in~\cite{Ak2}, the $\varepsilon_\sigma$-stability assumption on $\mathcal D$ is replaced by the stronger conditions:
$$
\varepsilon_\sigma(\rho_D)=\rho_{D}\quad \mbox{ and } \quad n_\bullet\sigma(G_D)n_\bullet^{-1}=G_{D}.
$$
These condition leave aside many cases as the preceding example shows.
\end{remark}

We now turn to some peculiar classes of spherical groups, namely, to wonderful and spherically closed subgroups of $G$.
\subsection{}

Equivariant embeddings of $G/H$ are classified by a finite family of couples $(\mathcal C,\mathcal F)$, subject to restrictions, with $\mathcal C$
being a finitely generated strictly convex cone in $V$ and $\mathcal F$ being a subset of $\mathcal D$. 
In case $\mathcal V$ is strictly convex (equivalently, if $N_G(H)/H$ is finite), the couple $(\mathcal V,\emptyset)$ is admissible
and thus corresponds to an equivariant embedding of $G/H$.
This embedding is complete; it is called \emph{the canonical embedding of $G/H$}.

If $N_G(H)/H$ is finite and its canonical embedding is smooth, the spherical subgroup $H\subset G$ is called \emph{wonderful}. 
Spherically closed subgroups of $G$ (see below for the definition)
and normalizers of spherical subgroups of $G$ are examples of wonderful subgroups of $G$; see~\cite{K}.

\begin{proposition}\label{wonderfulness}
Let $H$ be a spherical subgroup of $G$.
Then $H\subset G$ is wonderful if and only if $\sigma(H)$ is a wonderful subgroup of $G$.
\end{proposition}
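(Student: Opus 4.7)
The plan is to check separately the two conditions defining wonderfulness—finiteness of $N_G(H)/H$ and smoothness of the canonical embedding. As $\sigma^2=\mathrm{Id}_G$, in each case it suffices to prove the implication from $H$ to $\sigma(H)$; the converse follows by swapping roles.

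The finiteness clause is formal: although $\sigma$ is anti-holomorphic, it is an automorphism of the abstract group $G$. Hence $\sigma(N_G(H))=N_G(\sigma(H))$, and $\sigma$ descends to a bijection $N_G(H)/H\to N_G(\sigma(H))/\sigma(H)$, so one quotient is finite if and only if the other is. (Alternatively, this can be read off from Lemma~\ref{Akhiezerlemma}(ii), since $\mathcal V(G/H)$ is strictly convex if and only if its image $\varepsilon_\sigma(\mathcal V)=\mathcal V(G/\sigma(H))$ under the linear automorphism $\varepsilon_\sigma$ is strictly convex, and strict convexity of $\mathcal V$ is equivalent to finiteness of $N_G(H)/H$.)

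For smoothness, I would pass to the $\sigma$-conjugate of the canonical embedding. Let $X$ denote the canonical embedding of $G/H$, and let $\overline{X}$ be the conjugate complex variety (same underlying topological space, complex-conjugate structure sheaf) equipped with the twisted $G$-action $g\cdot x:=\sigma(g)\,x$. Then $\overline{X}$ is a complete normal $G$-variety containing $G/\sigma(H)$ as its open orbit (via $\mu_\sigma$), and is smooth if and only if $X$ is. By Lemma~\ref{Akhiezerlemma}, the colored cone associated with $\overline{X}$—obtained from $(\mathcal V,\emptyset)$ by transport through $\varepsilon_\sigma$ and $D\mapsto\mu_\sigma(n_\bullet D)$—coincides with $(\mathcal V(G/\sigma(H)),\emptyset)$, so the Luna--Vust classification identifies $\overline{X}$ with the canonical embedding of $G/\sigma(H)$. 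The main delicate point is setting up $\overline{X}$ rigorously and confirming that its Luna--Vust invariants are those predicted by Lemma~\ref{Akhiezerlemma}; this verification is, however, the substance of that lemma itself. A purely combinatorial alternative would be to appeal to a smoothness criterion for the canonical embedding expressed in terms of $(\mathcal X,\mathcal V,\mathcal D)$ and observe that it is preserved under the $\varepsilon_\sigma$-equivariant transport of invariants.
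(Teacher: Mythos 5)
Your argument is correct, but your primary route differs from the paper's in both halves. For finiteness of $N_G(H)/H$ you argue group-theoretically ($\sigma$ is an abstract automorphism, so $N_G(\sigma(H))=\sigma(N_G(H))$ and the quotients are in bijection), which is more elementary than what the paper does; the paper instead reads strict convexity of $\mathcal V(G/\sigma(H))$ off from Lemma~\ref{Akhiezerlemma}, exactly as in your parenthetical alternative. For smoothness, your main argument constructs the conjugate variety $\overline X$ with the $\sigma$-twisted action and identifies it, via the Luna--Vust classification, with the canonical embedding of $G/\sigma(H)$; this is sound (the key points being that $\overline X$ is complete, simple and toroidal, hence canonical, and that conjugation preserves smoothness), but it carries the set-up cost you yourself flag. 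The paper avoids this entirely by taking precisely the ``purely combinatorial alternative'' you mention at the end: it invokes Brion's criterion that the canonical embedding of $G/H'$ is smooth if and only if $\mathcal V(G/H')$ is generated by a basis of $V(G/H')$ (Section 4 of~\cite{B2}), a condition manifestly preserved by the linear automorphism $\varepsilon_\sigma$ in view of Lemma~\ref{Akhiezerlemma}. Your geometric route buys a conceptual explanation (the two canonical embeddings are literally conjugate varieties of one another, so \emph{any} geometric property transfers), while the paper's buys brevity and stays entirely within the combinatorial dictionary already established; if you keep your version, you should spell out why $\overline X$ is toroidal and why the twisted action is algebraic, which is the only place where real work is hidden.
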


\begin{proof}
First remark that $\sigma(H)\subset G$ is spherical.
By~Lemma~\ref{Akhiezerlemma}, $\mathcal V\left(G/\sigma(H)\right)=\varepsilon_\sigma\left(\mathcal V(G/H)\right)$ and 
$V\left(G/\sigma(H)\right)=\varepsilon_\sigma\left(V(G/H)\right)$.
It follows that the valuation cone of $G/\sigma(H)$ is strictly convex if so is $\mathcal V(G/H)$ and vice versa.
Finally, the canonical embedding (whenever it exists) of any spherical $G/H'$ is smooth if and only if
$\mathcal V(G/H')$ is generated by a basis of $V(G/H')$; see Section 4 in~\cite{B2}.
This criterion allows to conclude the proof.
\end{proof}

\subsection{}\label{recall-color}
For later purposes, we shall need the following properties of wonderful subgroups of $G$.

Consider the natural epimorphism 
$$\pi: G\rightarrow G/H
$$
with $H\subset G$ being spherical.

Given $D\in\mathcal D$, $\pi^{-1}(D)$ is a $B$-stable prime divisor of $G$.
Replacing $G$ by a finite covering, we can (and do) assume now that $G$ is simply connected.
Then there exists a unique $(B\times H)$-eigenfunction $f_D$ in $\mathbb C[G]$ defining $\pi^{-1}(D)$ and such that $f_D$ equals $1$ on the neutral element of $G$.
We denote the $(B\times H)$-weight of $f_D$ by $(\omega_D,\chi_D)$; the restrictions of $\omega_D$ and $\chi_D$ onto $B\cap H$ coincide.

Let $\mathcal X(B)$ (resp. $\mathcal X(H)$) be the set of characters of $B$ (resp. $H$).
\begin{lemma}\label{weight-colors}
If $H\subset G$ is wonderful then the following assertions hold.
\begin{enumerate}
 \item The $(B\times H)$-eigenfunctions  of $\mathbb C[G]$ are the monomials in the $f_D$'s, $D\in\mathcal D$.
\item The abelian group $\mathcal X(B)\times _{\mathcal X(B\cap H)}\mathcal X (H)$ is isomorphic to the Picard group of X.
Moreover, it is freely generated by the couples $(\omega_D,\chi_D), D\in\mathcal D$.

\end{enumerate}

\end{lemma}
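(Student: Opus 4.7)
For part (1), the main step is to show that every $(B\times H)$-eigenfunction $f \in \mathbb{C}[G]$ is, up to scalar, a monomial in the $f_D$. Its principal divisor $\mathrm{div}_G(f)$ is $(B\times H)$-stable and effective, and being right-$H$-stable, it is the pull-back along $\pi$ of an effective $B$-stable Weil divisor on $G/H$. Since $G/H$ is $G$-homogeneous, its only $B$-stable prime divisors are the colors, so $\mathrm{div}_G(f)=\sum_{D\in \mathcal{D}} n_D\,\pi^{-1}(D)$ with $n_D \in \mathbb{Z}_{\geq 0}$. The function $g := f \cdot \prod_D f_D^{-n_D}$ is then a nowhere-vanishing regular function on $G$, hence a unit. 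Since $G$ is semisimple and simply connected, $\mathbb{C}[G]^\times = \mathbb{C}^\times$ (trivial character group together with factoriality). Thus $g$ is a constant; evaluating at $e\in G$, where $f_D(e)=1$, yields the monomial expression $f = f(e)\prod_D f_D^{n_D}$.

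For part (2), I would first identify $\Lambda := \mathcal{X}(B)\times_{\mathcal{X}(B\cap H)}\mathcal{X}(H)$ with the weight lattice of rational $(B\times H)$-eigenfunctions in $\mathbb{C}(G)$. Since $BH$ is open in $G$, the $(B\times H)$-action on $G$ is spherical, so a rational eigenfunction of prescribed weight, if it exists, is unique up to scalar; the compatibility $\omega|_{B\cap H}=\chi|_{B\cap H}$ is exactly what is needed to prescribe the value of $f_{(\omega,\chi)}$ on $B\cap H \subset BH$ consistently from both sides, and conversely such an $f_{(\omega,\chi)}$ can then be extended uniquely to $BH$ and then to $G$ by sphericity. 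Repeating the divisor argument of (1), but now for the rational $f_{(\omega,\chi)}$ with $n_D \in \mathbb{Z}$, the ratio $f_{(\omega,\chi)}\cdot \prod f_D^{-n_D}$ has trivial divisor on $G$, hence is a unit and therefore a constant. This expresses $(\omega, \chi)$ as an integer combination of the $(\omega_D, \chi_D)$. Conversely, a relation $\sum n_D (\omega_D, \chi_D) = 0$ forces $\prod f_D^{n_D}$ to be a constant rational function, so $\sum n_D \pi^{-1}(D) = 0$ as Weil divisors on $G$, forcing all $n_D = 0$. Therefore $\Lambda = \bigoplus_{D\in\mathcal{D}} \mathbb{Z}\,(\omega_D, \chi_D)$.

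To identify $\Lambda$ with $\mathrm{Pic}(X)$, I would invoke the classical fact for the smooth projective wonderful embedding $X$ of $G/H$ (with $G$ simply connected) that $\mathrm{Pic}(X)$ is freely generated by the divisor classes $[\overline{D}]$ of the color closures, the $G$-stable boundary divisors being linearly equivalent to explicit integer combinations of colors. The isomorphism is given by $[\overline{D}]\mapsto(\omega_D,\chi_D)$: the canonical section of the (canonically $G$-linearized) line bundle $\mathcal{O}_X(\overline{D})$ pulls back via $\pi$ to precisely $f_D\in\mathbb{C}[G]$, which by construction carries the $(B\times H)$-weight $(\omega_D,\chi_D)$.

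The principal obstacle is the input that $\mathrm{Pic}(X)=\bigoplus_{D} \mathbb{Z}[\overline{D}]$, which is a standard but nontrivial result in the theory of wonderful varieties; the rest of the argument is essentially bookkeeping. A minor secondary point is to check that the construction of the rational eigenfunction $f_{(\omega,\chi)}$ for every compatible pair $(\omega,\chi) \in \Lambda$ really does yield something defined on the whole of $G$ (and not merely on the open cell $BH$); this is ensured by the normality and the sphericity of the ambient action, which makes the unique extension across complementary divisors automatic.
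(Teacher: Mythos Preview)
The paper does not prove this lemma at all: its entire proof is the sentence ``See Section~6 in~\cite{Lu01} as well as Subsection~3.2 in~\cite{B3}.'' Your argument is essentially a faithful unpacking of the standard proof contained in those references (unique factorisation of $(B\times H)$-eigenfunctions via divisors on $G$, surjectivity onto the fibre-product lattice via the open cell $BH$, and the identification of $\mathrm{Pic}(X)$ with the free group on the colour closures due to Brion), so there is no genuine difference of approach to report.

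One small point of confusion in your write-up: your closing worry about extending the rational eigenfunction $f_{(\omega,\chi)}$ from the open cell $BH$ to all of $G$ is unnecessary. Since $G$ is irreducible and $BH$ is dense open, a rational function on $BH$ \emph{is} already an element of $\mathbb{C}(G)$; there is nothing to extend, and normality or sphericity plays no role at that step. (Extension would only be an issue for \emph{regular} functions, which is not what you need in part~(2).) With that remark removed, your argument is clean and correct.
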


\begin{proof}
See Section 6 in~\cite{Lu01} as well as Subsection 3.2 in~\cite{B3}.
\end{proof}

\subsection{}
We consider now spherically closed subgroups of $G$. 

First, let us recall their definition.
By $N_G(H)$ we denote the normalizer of $H$ in $G$.
The group of $G$-automorphisms of $G/H$ can be identified with $N_G(H)/H$ and $N_G(H)$ acts naturally on the set $\mathcal D$ of colors of $G/H$.
The spherical closure $\overline H$ of $H$ is defined as the subgroup of $N_G(H)$ which fixes each element of $\mathcal D$;
the group $\overline{H}$  is a spherical subgroup of $G$ containing $H$.
The group $H$ is called \emph{spherically closed} if $H=\overline H$.

\begin{example}
\begin{enumerate}
\item
Let $G=SL_2$.
The $G$-variety $G/T$ is isomorphic to the complement of the diagonal of $\mathbb P^1\times\mathbb P^1$;
it is spherical and has two colors, both having the same $B$-weight, that is the fundamental weight of $G$.
The normalizer of $T$ in $G$ exchanges these two colors; since $N_G(T)/T\simeq \mathbb Z_2$, the subgroup $T$ of $G$ is spherically closed.
\item
Let $G=SO_{2n+1}$ and $H$ be the stabilizer of a non-isotropic line in $\mathbb C^{2n+1}$.
The subgroup $H\subset G$ is self-normalizing and not connected.
Let $H^\circ$ be the identity component of $H$. 
Then $G/H$ and $G/H^\circ$  are spherical $G$-varieties and they both have only one color. 
The subgroup $H\subset G$ is spherically closed whereas $H^\circ$ is not since $H^\circ\neq \overline{H^\circ}=N_G(H^\circ)=H$.
\end{enumerate}
\end{example}

\begin{proposition}\label{antiholospherclosed}
If $H$ is a spherically closed subgroup of $G$ then so is $\sigma(H)$.
\end{proposition}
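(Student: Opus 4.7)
The plan is to show directly that $\overline{\sigma(H)}=\sigma(\overline{H})$, which immediately implies the statement: if $H$ is spherically closed then $\sigma(H)=\sigma(\overline{H})=\overline{\sigma(H)}$.

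First I would fix the setup. Since $\sigma$ is a group automorphism (as an abstract group), one has $N_G(\sigma(H))=\sigma(N_G(H))$, and by Lemma~\ref{Akhiezerlemma}(3) the colors of $G/\sigma(H)$ are exactly the sets $\mu_\sigma(n_\bullet D)$ for $D\in\mathcal{D}(G/H)$. Any element of $N_G(\sigma(H))$ can be written as $\sigma(m)$ with $m\in N_G(H)$, and one has to understand how $\sigma(m)$ acts on a color $\mu_\sigma(n_\bullet D)$. Writing $\phi_m$ for the $G$-automorphism of $G/H$ induced by $m$ (so $\phi_m(gH)=gm^{-1}H$), a direct unwinding of definitions gives the key identity
$$
\phi_{\sigma(m)}\bigl(\mu_\sigma(n_\bullet D)\bigr)=\mu_\sigma\bigl(n_\bullet\,\phi_m(D)\bigr).
$$
Since $\mu_\sigma$ and left multiplication by $n_\bullet$ are bijections on the set of $B$-stable prime divisors, the right-hand side equals $\mu_\sigma(n_\bullet D)$ if and only if $\phi_m(D)=D$.

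From this it follows that $\sigma(m)$ fixes every color of $G/\sigma(H)$ if and only if $m$ fixes every color of $G/H$. In other words, $\sigma(m)\in\overline{\sigma(H)}$ if and only if $m\in\overline{H}$, so $\overline{\sigma(H)}=\sigma(\overline{H})$. Assuming $H=\overline{H}$, we obtain $\sigma(H)=\overline{\sigma(H)}$, as required. Note that we also need $\sigma(H)\subset G$ to be spherical; this is recorded right after the definition of $\mu_\sigma$ in the excerpt.

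The only delicate point I expect is the bookkeeping in the key identity displayed above: one must keep track of whether $\sigma$-conjugation and left multiplication by $n_\bullet$ intertwine correctly with the right-translation-style action of $N_G(H)$ on colors. Once this intertwining identity is verified, the rest is just matching definitions.
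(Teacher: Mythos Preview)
Your argument is correct. Both your proof and the paper's establish the same core fact---that $\sigma(m)$ fixes every color of $G/\sigma(H)$ if and only if $m\in N_G(H)$ fixes every color of $G/H$---yielding $\overline{\sigma(H)}=\sigma(\overline H)$ and hence the proposition. The paper carries this out via the $(B\times H)$-eigenfunctions $f_D$: starting from $n\in\overline{\sigma(H)}$, it computes that $\sigma(n)\cdot(\overline{f_D\circ\sigma})=\overline{f_D\circ\sigma}$, and then identifies $\overline{f_D\circ\sigma}$ with $f_{\mu_\sigma^{-1}(D)}$ to conclude $\sigma(n)\in\overline H=H$. You instead work directly with divisors, invoking Lemma~\ref{Akhiezerlemma}(3) for the explicit bijection $D\mapsto\mu_\sigma(n_\bullet D)$ between the two color sets and checking the intertwining identity $\phi_{\sigma(m)}\circ(\mu_\sigma\circ L_{n_\bullet})=(\mu_\sigma\circ L_{n_\bullet})\circ\phi_m$; as you say, this is an immediate unwinding of definitions (both sides send $gH$ to $\sigma(n_\bullet g m^{-1})\sigma(H)$). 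Your route is a bit more direct since it avoids the passage through regular functions and the identification $\overline{f_D\circ\sigma}=f_{\mu_\sigma^{-1}(D)}$, but the two arguments are close in spirit.
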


\begin{proof}
Let us first remark that the normalizer of $\sigma(H)$ in $G$ equals $\sigma(N_G(H))$.
Consider now an element $n\in\sigma(N_G(H))$ which fixes every $\sigma(B)$-color of $G/\sigma(H)$.
We shall prove that $\sigma(n)\in H$. 

Let $\pi_\sigma: G\rightarrow G/\sigma(H)$ be the natural projection and 
$f_D\in \mathbb C[G]$ define the equation of $\pi_\sigma^{-1}(D)$ with $D$ being a $\sigma(B)$-color of $G/\sigma(H)$.
By assumption on $n$, we have: $n\cdot f_D=f_D$.
It follows that  
$$
\begin{array}{ll}
\left( \sigma(n)\cdot (\overline{f_D\circ \sigma}) \right)(g) &= (\overline{f_D\circ\sigma})(g\sigma(n)) \\
& =\overline{f_D(\sigma(g)n)}=\overline{(n\cdot f_D)(\sigma(g))} \\
& = \overline{f_D(\sigma(g))}  =(\overline{f_D\circ\sigma})(g).
\end{array}
$$

Note that $\overline{f_D\circ \sigma}=f_{\mu_\sigma^{-1}(D)}$ and the set $\mathcal D$ consists of the elements $\mu_\sigma^{-1}(D)$
with $D$ a $\sigma(B)$-color of $G/\sigma(H)$.
We thus obtain that $\sigma(n)\in N_G(H)$ fixes every color of $G/H$ and since $H$ is supposed to be spherically closed, this gives as desired: $\sigma(n)\in H$.

\end{proof}

\begin{corollary}\label{uniquenessspherclosed}
Let $H\subset G$ be spherical and $\sigma(H)=aHa^{-1}$ with $a\in G$.
If further $H\subset G$ is spherically closed then
the map 
$$
\mu: G/H\longrightarrow G/H, \quad gH\longmapsto \sigma(g)a H
$$
is the unique $\sigma$-equivariant real structure on $G/H$.
\end{corollary}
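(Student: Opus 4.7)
The verifications that $\mu$ is well-defined, $\sigma$-equivariant, and antiholomorphic are routine: if $g_{1}H=g_{2}H$, writing $g_{2}=g_{1}h$ with $h\in H$ yields $a^{-1}\sigma(h)a\in a^{-1}\sigma(H)a=H$, whence $\sigma(g_{2})aH=\sigma(g_{1})aH$; the $\sigma$-equivariance $\mu(g'gH)=\sigma(g')\sigma(g)aH=\sigma(g')\mu(gH)$ and the antiholomorphy are immediate. The substantive content is $\mu^{2}=\mathrm{Id}_{G/H}$ together with uniqueness.

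A direct computation gives $\mu^{2}(gH)=g\,\sigma(a)a\,H$, so it suffices to show $n:=\sigma(a)a\in H$. Applying $\sigma$ to $\sigma(H)=aHa^{-1}$ yields $H=\sigma(a)aHa^{-1}\sigma(a)^{-1}$, hence $n\in N_{G}(H)$. Since $H$ is spherically closed, $H$ is by definition the subgroup of $N_{G}(H)$ fixing each color of $G/H$, so it is enough to prove that $n$ fixes every $D\in\mathcal{D}$. I would do this by adapting the argument in the proof of Proposition~\ref{antiholospherclosed}: assuming $G$ simply connected, take the canonical defining $(B\times H)$-eigenfunction $f_{D}\in\mathbb{C}[G]$ of Section~\ref{recall-color}, together with the $G$-equivariant isomorphism $\phi\colon G/\sigma(H)\to G/H$, $g\sigma(H)\mapsto gaH$, which is well defined by our hypothesis. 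Writing $\mu=\phi\circ\mu_{\sigma}$, the identification $\overline{f_{D}\circ\sigma}=f_{\mu_{\sigma}^{-1}(D)}$ (used in Proposition~\ref{antiholospherclosed}) together with $\phi$ gives a canonical correspondence between $(B\times H)$-eigenfunctions associated to colors of $G/H$; applying the correspondence twice shows that $f_{D}\circ R_{n}=c_{D}\,f_{D}$ for some $c_{D}\in\mathbb{C}^{\times}$, where $R_{n}$ denotes right multiplication by $n$, i.e., $n$ fixes the color $D$. Hence $n\in\overline{H}=H$ as desired.

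For uniqueness, suppose $\mu'$ is another $\sigma$-equivariant real structure on $G/H$. Then $\mu^{-1}\circ\mu'$ is a $G$-equivariant holomorphic automorphism of $G/H$, hence is right multiplication by some $m\in N_{G}(H)/H$. Writing $\mu'(gH)=\sigma(g)a'H$ with $a'\in aN_{G}(H)$ and applying the same eigenfunction analysis to the involution condition $(\mu')^{2}=\mathrm{Id}$, one concludes that $m$ must act trivially on $\mathcal{D}$, and spherical closedness forces $m\in H$, i.e., $\mu'=\mu$.

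The main technical obstacle is the color-fixing computation: explicitly tracking $f_{D}$ through the composition $\mu^{2}=\phi\mu_{\sigma}\phi\mu_{\sigma}$ and identifying the resulting function as a scalar multiple of $f_{D}$ requires careful use of the Luna--Vust correspondence recorded in Lemma~\ref{Akhiezerlemma}(3) and of the conventions relating the eigenfunctions on $G$ for colors of $G/H$ and $G/\sigma(H)$.
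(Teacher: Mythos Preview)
Your proposal is correct and follows essentially the same route as the paper: reduce $\mu^{2}=\mathrm{Id}$ to $n=\sigma(a)a\in H$, show $n\in N_G(H)$, then prove $n$ fixes every color via the canonical $(B\times H)$-eigenfunctions $f_D$, and invoke spherical closedness; uniqueness is handled by the same mechanism applied to the composite of two real structures.

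The paper resolves the ``technical obstacle'' you flag more directly than your factoring $\mu=\phi\circ\mu_\sigma$ suggests. Rather than tracking colors through Lemma~\ref{Akhiezerlemma}(3), it simply lifts $\mu$ to $\tilde\mu:G\to G$, $g\mapsto\sigma(g)a$, and observes that $\overline{\,\overline{f_D\circ\tilde\mu}\circ\tilde\mu\,}$ is a $(B\times H)$-eigenfunction of the \emph{same} weight $(\omega_D,\chi_D)$ as $f_D$. The point you are missing is then Lemma~\ref{weight-colors}: since the $(\omega_D,\chi_D)$ freely generate and the eigenfunctions are monomials in the $f_D$'s, equality of weights forces the function to be a scalar multiple of $f_D$, so $n$ fixes $D$. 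This replaces your proposed chase through $\phi\mu_\sigma\phi\mu_\sigma$ by a one-line weight comparison. (Note that this uses wonderfulness of $H$, which holds because spherically closed subgroups are wonderful.)

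One small imprecision in your uniqueness paragraph: applying the analysis to $(\mu')^{2}=\mathrm{Id}$ only yields $\sigma(a')a'\in H$, which does not directly control $m$. The paper instead applies the eigenfunction argument to the $G$-automorphism $\mu\circ\mu'$ itself (given by $n=\sigma(a')a\in N_G(H)$), showing exactly as before that $n$ fixes every color, hence $n\in H$.
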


\begin{proof}
The map $\mu^2$ defines a $G$-equivariant automorphism of $G/H$ hence yields a bijection of $\mathcal D$.
Note that $\mu^2(gH)=g\sigma(a) aH$.
Since $\sigma$ is an involution of $G$, we have: $H=\sigma^2(H)=\sigma(a)aHa^{-1}\sigma(a)^{-1}$ and in turn, $\sigma(a)a\in N_G(H)$.

Let $\tilde\mu: G\rightarrow G$ be the map defined by $g\mapsto \sigma(g)a$.
We thus have: $\mu\circ\pi=\pi\circ\tilde\mu$.
Note that the function $f_D$ is mapped through $\tilde \mu^2$ to the complex conjugate of $\overline{f_D\circ\tilde\mu}\circ\tilde\mu$;
the latter is obviously a $(B\times H)$-eigenfunction of weight
$(\omega_D,\chi_D)$-- the weight of $f_D$.
By Lemma~\ref{weight-colors}, $\mu^2$ thus fixes each element of the set $\mathcal D$; 
this implies that $\sigma(a)a$ is an element of the spherical closure of $H$ which is $H$ itself by hypothesis.

To prove the uniqueness assertion, we consider another $\sigma$-equivariant real structure on $G/H$, say $\mu'$.
The map $\mu\circ \mu'$ is thus a $G$-automorphism of $G/H$ hence it is given by an element, say $n$, of $N_G(H)$.
Arguing similarly as we have just done to prove that $\mu$ is involutive, 
we can show that $n\in H$. This yields as desired that $\mu\circ \mu'$ is the identity map on $G/H$.
\end{proof}

\begin{example}
Let $G=SL_2$ and $H=T=\overline{T}$.
The map $\mu$ stated in Corollary~\ref{uniquenessspherclosed} is clearly an involution for every $\sigma$ such that $\sigma(T)=T$. 
\end{example}

\section{Geometric operations on varieties and related constructions of real structures}

We investigate now how real structures and the related real parts (definition recalled right below) are carried over through geometrical operations on varieties:
Cartesian product, parabolic induction.

\subsection{}
Let $X_1$ and $X_2$ be two complex manifolds, equipped with real structures $\mu_1$ and $\mu_2$ respectively.
Then the direct product $(\mu_1,\mu_2)$ is obviously a real structure on $X_1\times X_2$. 
Moreover, if there exists an anti-holomorphic diffeomorphism $\tau:X_1\rightarrow X_2$, then
the map $(x_1,x_2)\mapsto(\tau^{-1}(x_2),\tau(x_1))$ defines a real structure on $X_1\times X_2$. 

\subsection{}\label{sectionparabolicinduction}

Let $P$ be any parabolic subgroup of $G$ and let $P=P^uL$ its Levi decomposition with $L$ being the Levi subgroup of $P$ containing $T$.
Given a $L$-variety $X'$, one considers the fiber product $X:=G\times_P X'$ with $P^u$ acting trivially on $X'$.
The variety $X$ is usually called a parabolic induction of $X'$; it is a $G$-variety with the natural action of $G$.

\begin{remark}
Let $P_1$ and $P_2$ be parabolic subgroups of $G$ such that $P_1\cap P_2$ contains a Levi subgroup $L$ of both $P_1$ and $P_2$.
If there exists $n\in G$ such that $P_2=nP_1 n^{-1}$ then
$[g,x]\mapsto [gn, x]$ defines an isomorphism between the $G$-varieties $G\times_{P_1} X'$ and $G\times_{P_2}X'$.
\end{remark}

\begin{lemma}~\label{parabolicinduction}
Let $P$ be a parabolic subgroup of $G$ such that $\sigma(P)=nPn^{-1}$ with $n\in G$.
Suppose further that the Levi factor $L$ of $P$ containing $T$ is $\sigma$-stable.
Let $X'$ be a $L$-variety equipped with a $\sigma_L$-equivariant anti-holomorphic map $\mu'$
(with $\sigma_L$ being the restriction of $\sigma$ onto $L$).
Then
\begin{equation}~\label{inducedstructure}
G\times_P X'\longrightarrow G\times_P X',\qquad [g,x]\longmapsto [\sigma(g)n,\mu'(x)]
 \end{equation}
defines a $\sigma$-equivariant anti-holomorphic diffeomorphism.
\end{lemma}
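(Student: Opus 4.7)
The strategy is to lift the proposed formula to the map
$$
\tilde\mu:G\times X'\longrightarrow G\times X',\qquad (g,x)\longmapsto(\sigma(g)n,\mu'(x)),
$$
which is manifestly anti-holomorphic since $\sigma$ and $\mu'$ are, and then to check that $\tilde\mu$ descends to the $P$-quotient $G\times_P X'$. Once well-definedness is secured, the $\sigma$-equivariance of the induced map is formal, as $\mu(g'\cdot[g,x])=[\sigma(g'g)n,\mu'(x)]=\sigma(g')\cdot\mu([g,x])$.

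I would start by arranging that $n\in N_G(L)$. Because $L$ is $\sigma$-stable, both $L=\sigma(L)$ and $nLn^{-1}$ are Levi factors of the parabolic $\sigma(P)=nPn^{-1}$; hence they are conjugate by an element of its unipotent radical $\sigma(P^u)=nP^u n^{-1}$. Writing this element as $nu n^{-1}$ with $u\in P^u$, one finds $nu^{-1}\in N_G(L)$, and by the preceding Remark we may replace $n$ by $nu^{-1}$ without altering the parabolic-induction data (nor the condition $nPn^{-1}=\sigma(P)$, since $u\in P$).

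The substantive computation is then the well-definedness on the quotient. For $p\in P$ decomposed as $p=\ell u'$ with $\ell\in L$ and $u'\in P^u$, set $p'':=n^{-1}\sigma(p)n$, which lies in $P$ thanks to $\sigma(P)=nPn^{-1}$. Using $n\in N_G(L)$ together with the $\sigma$-stability of $L$ and of $P^u$, the Levi decomposition of $p''$ reads $p''=(n^{-1}\sigma(\ell)n)(n^{-1}\sigma(u')n)$, with $L$-part $n^{-1}\sigma(\ell)n$. On the $X'$-side, the triviality of the $P^u$-action yields $\mu'(p^{-1}x)=\mu'(\ell^{-1}x)$, which by the $\sigma_L$-equivariance of $\mu'$ equals $\sigma(\ell)^{-1}\mu'(x)$. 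Matching these two pieces of information shows that $\tilde\mu(gp,p^{-1}x)$ and $\tilde\mu(g,x)$ differ by the action of $p''\in P$, so they represent the same class in $G\times_P X'$.

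The main obstacle is precisely this compatibility check, where the choice of $n\in N_G(L)$, the simultaneous $\sigma$-stability of $L$ and $P^u$, and the triviality of the $P^u$-action on $X'$ all have to interlock with the $\sigma_L$-equivariance of $\mu'$ so as to line up the $L$-parts on the two sides of the equivalence relation. Once this is done, the anti-holomorphicity and $\sigma$-equivariance of $\mu$ are immediate from the construction of $\tilde\mu$; that the resulting map is a diffeomorphism follows by writing down the analogous inverse formula with $n$ replaced by $\sigma(n)^{-1}$.
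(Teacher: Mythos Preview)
Your direct-verification approach contains a genuine gap at the step you flag as ``matching these two pieces of information.'' With $n\in N_G(L)$ you correctly find that the $L$-part of $p''=n^{-1}\sigma(p)n$ is $n^{-1}\sigma(\ell)n$, and that $\mu'(p^{-1}x)=\sigma(\ell)^{-1}\mu'(x)$. But for $(\sigma(g)n\,p'',\,\mu'(p^{-1}x))\sim_P(\sigma(g)n,\,\mu'(x))$ you need
\[
(p'')^{-1}\cdot\mu'(x)=\mu'(p^{-1}x),\qquad\text{i.e.}\qquad \bigl(n^{-1}\sigma(\ell)^{-1}n\bigr)\cdot\mu'(x)=\sigma(\ell)^{-1}\cdot\mu'(x),
\]
which forces $n^{-1}\sigma(\ell)n$ and $\sigma(\ell)$ to act identically on $X'$ for \emph{every} $\ell\in L$. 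Normalising $L$ is not enough: you would need $n$ to centralise the $L$-action on $X'$, and your adjustment $n\mapsto nu^{-1}$ with $u\in P^u$ cannot achieve this in general (any two choices of $n$ satisfying $\sigma(P)=nPn^{-1}$ and $n\in N_G(L)$ differ by an element of $L$, so you cannot kill a nontrivial inner action of $n$ on $L$). Your ``interlocking'' of ingredients therefore does not close.

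The paper avoids this difficulty by factoring the map. It first checks that $(g,x)\mapsto(\sigma(g),\mu'(x))$ descends to a well-defined anti-holomorphic map $G\times_P X'\to G\times_{\sigma(P)}X'$: here the required identity is simply $\mu'(\sigma(p)\cdot x)=p\cdot\mu'(x)$ (equivalently $\mu'(p\cdot x)=\sigma(p)\cdot\mu'(x)$), which follows immediately from the $\sigma_L$-equivariance of $\mu'$ together with $\sigma(P^u)=\sigma(P)^u$, and involves no $n$ at all. Only afterwards does one invoke the $G$-isomorphism $G\times_{\sigma(P)}X'\cong G\times_P X'$ coming from the conjugacy $\sigma(P)=nPn^{-1}$. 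Separating the two steps is what makes the argument go through cleanly; your merging of them is precisely where the extra, unmet hypothesis on $n$ creeps in.
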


\begin{proof}
First note that $\sigma(P)$ is a parabolic subgroup of $G$; it contains the Borel subgroup $\sigma(B)$ of $G$.
Since $\sigma(L)=L$, we can consider the parabolic inductions $G\times_{P} X'$ and $G\times_{\sigma(P)} X'$.
In particular, we let the unipotent radicals $P^ u$ and $\sigma(P)^u$ of $P$ and $\sigma(P)$ resp. act trivially on $X'$.
From $\sigma(P)^u=\sigma(P^u)$, we derive for any $(p=p^ul,x)\in P^uL\times X'$ the following equalities:
$$
\mu'(\sigma(p)x)=\mu'(\sigma(l)x)=l\mu'(x)=p\mu'(x)
$$
As a consequence, the assignment $(g,x)\mapsto (\sigma(g),\mu'(x))$
defines an anti-holomorphic map from $G\times_P X'$ to $G\times_{\sigma(P)}X'$.
Moreover, the subgroups $P$ and $\sigma(P)$ of $G$ being conjugate by assumption,
the $G$-varieties $G\times_P X'$ and $G\times_{\sigma(P)}X'$ are isomorphic; see the remark above. 
The lemma follows.
\end{proof}

\begin{proposition}\label{realpointparabolicinduction}
Let $X'$ and $X=G\times_P X'$ satisfy the properties stated in Lemma~\ref{parabolicinduction}.
Suppose also that $X$ is equipped with the diffeomorphism stated in~(\ref{inducedstructure}).
If $X$ contains fixed points w.r.t this diffeomorphism, so does $G/P$ w.r.t $gP\mapsto \sigma(g)P$.
\end{proposition}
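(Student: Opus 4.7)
The plan is to push fixed points down via the natural projection.

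The natural projection $\pi\colon X = G\times_P X' \to G/P$, $[g,x]\mapsto gP$, is surjective and $G$-equivariant. My first step is to verify that the anti-holomorphic diffeomorphism $\mu\colon[g,x]\mapsto[\sigma(g)n,\mu'(x)]$ of~(\ref{inducedstructure}) descends through $\pi$ to a self-map $\tau$ of $G/P$ given by $gP\mapsto \sigma(g)nP$. Well-definedness of $\tau$ uses only the hypothesis $\sigma(P)=nPn^{-1}$, rewritten as $\sigma(P)n=nP$: for any $p\in P$ one has $\sigma(gp)n=\sigma(g)\sigma(p)n\in\sigma(g)nP$. The intertwining relation $\pi\circ\mu=\tau\circ\pi$ is then immediate from the formulas.

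Next, I would identify this $\tau$ with the map ``$gP\mapsto \sigma(g)P$'' appearing in the statement. A priori the latter is a map $G/P\to G/\sigma(P)$; composing it with the canonical $G$-equivariant isomorphism $G/\sigma(P)\xrightarrow{\sim} G/P$, $h\sigma(P)\mapsto hnP$ (well-defined since parabolic subgroups of $G$ are self-normalizing, so $n$ is determined modulo right multiplication by $P$), produces precisely $\tau$. Consequently, fixed points of ``$gP\mapsto\sigma(g)P$'' correspond bijectively to fixed points of $\tau$ on $G/P$.

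Finally, given a fixed point $[g_0,x_0]\in X$ of $\mu$, the intertwining relation yields
$$
\tau(g_0P)=\tau(\pi([g_0,x_0]))=\pi(\mu([g_0,x_0]))=\pi([g_0,x_0])=g_0P,
$$
so $g_0P$ is a fixed point of $\tau$, which is the desired conclusion. The proof is essentially formal: the only bookkeeping consists in checking the descent of $\mu$ through $\pi$ and fixing the identification $G/\sigma(P)\cong G/P$; I do not anticipate any genuine obstacle.
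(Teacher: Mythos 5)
Your argument is correct and follows essentially the same route as the paper: the paper also takes a fixed point $[g,z]$, extracts $\sigma(g)n=gp^{-1}$ from the equivalence relation defining $G\times_P X'$, and concludes that $gP$ is fixed by $gP\mapsto\sigma(g)nP$. Your write-up merely makes explicit two points the paper leaves implicit — the descent of the map~(\ref{inducedstructure}) through the projection $\pi$, and the identification of $gP\mapsto\sigma(g)nP$ with $gP\mapsto\sigma(g)P$ via $G/\sigma(P)\simeq G/P$.
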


\begin{proof}\label{inducingrealpoint}
By assumption, $\sigma(P)=nPn^{-1}$ for some $n\in G$. 
Let $x=[g,z]\in X=G\times_P X'$ be a fixed point, we thus get: $\sigma(g)n=gp^{-1}$.
This implies that $gP$ is a real point of $G/P$ with respect to the real structure $gP\mapsto \sigma(g)n P$.
The proposition follows.
\end{proof}

\section{Wonderful varieties}\label{wonderful}

We shall now be concerned with a particular class of spherical varieties: the wonderful varieties.

Wonderful $G$-varieties are classified by combinatorial objects supported on the Dynkin diagram of $G$ called spherical systems.
The purpose of this section is to establish an existence criterion of $\sigma$-equivariant real structures as well as quantitative properties of real loci
 of wonderful varieties in terms of these invariants, the automorphism $\varepsilon_\sigma$ of the Dynkin diagram of $G$ as well as the Cartan index
of a canonical  $\sigma$-self-conjugate simple $G$-module.

\subsection{Basic material}
The canonical embedding of a $G/H$ with $H\subset G$ being wonderful can be intrinsically defined.
Specifically, by a theorem of~\cite{Lu96}, 
a smooth complete $G$-variety $X$ is a smooth canonical embedding of a spherical homogeneous space if and only if
\begin{enumerate}
 \item $X$ contains an open $G$-orbit $X_G^\circ$;
 \item the complement $X\setminus X_G^\circ$ consists of a finite union of prime divisors $D_1,\ldots, D_r$ with normal crossings;
\item two points of $X$ are on the same $G$-orbit if (and only if) they are contained in the same $D_i$'s.
\end{enumerate}

We call a smooth complete $G$-variety \emph{wonderful of rank $r$} if it satisfies the aforementioned properties (1), (2) and (3).
 
As mentioned above, wonderful subgroups of $G$ (and in turn wonderful $G$-varieties) can be classified by more convenient invariants than the Luna-Vust invariants.
Let us recall how they are defined by Luna~\cite{Lu01}. One may consult also~\cite{T} for a survey.

Let $X$ be a wonderful $G$-variety.
Equivalently, consider a wonderful subgroup $H$ of $G$ and denote as previously its Luna-Vust invariants by $\mathcal X,V, \mathcal V, \mathcal D$.
The cone $\mathcal V$ being strictly convex and simplicial, it can be defined by inequalities.
More precisely, there exists a set $\Sigma_X$ of linearly independent primitive elements  such that
$$
\mathcal V=\left\{ v\in V:\, v(\gamma)\leq 0, \,\forall \gamma\in\Sigma_X\right\}.
$$
The set $\Sigma_X$ is called \emph{the set of spherical roots of $X$} (or $G/H$); it forms a basis of $V$ and, in turn, it also determines $\mathcal X$ entirely.
Consider now the set of colors $\mathcal D$.
Let
$$
P_X=\bigcap_{D\in\mathcal D} G_D.
$$
Obviously, $P_X$ is a parabolic subgroup of $G$ containing $B$; let thus $S_X$ be the set of simple roots associated
to $P_X$.

Finally, the third datum $\mathbf A_X$ attached to $X$ is a subset of $V$.
Given $\alpha\in \Sigma_X\cap S$, let 
$$
\mathbf A_X(\alpha)=\{\rho_D: D\in\mathcal D\mbox{ and } P_\alpha\cdot D\neq D\}\subset V
$$
where $B\subset P_\alpha$ stands for the parabolic subgroup of $G$ associated to $\alpha$.
Recall that the $\rho_D$ may not be distinct; we thus regard the set $\mathbf A_X(\alpha)$ as a multi-set.
The set $\mathbf A_X$ is defined as the union of the $\mathbf A_X(\alpha)$'s with $\alpha\in \Sigma_X\cap S$.

The triple $(S_X,\Sigma_X,\mathbf A_X)$ is called \emph{the spherical system of $X$} (or $G/H$).
We denote the spherical system of $X$ by $\mathscr S_X$ or, shortly, by $\mathscr S$.

Wonderful $G$-varieties are uniquely determined (up to $G$-isomorphism) by their spherical systems; see~\cite{Lo,CF}.

\subsection{An existence criterion for real structures}

Given $\sigma$, recall the definition of the associated automorphism $\varepsilon_\sigma$ of $S$ as well as its properties stated in Section~\ref{recallsauto}.

The spherical system $\mathscr S=(S_X,\Sigma_X,\mathbf A_X)$ of a wonderful $G$-variety $X$ is called \emph{$\varepsilon_\sigma$-stable} if 
the sets $S_X,\Sigma_X$ and $\mathbf A_X$ are stable by $\varepsilon_\sigma$.

\begin{lemma}\label{sphericalsystemstability}
Let $H\subset G$ be wonderful with spherical system  $\mathscr S=(S^p,\Sigma,\mathbf A)$.
Then $\sigma(H)\subset G$ is wonderful and its spherical system is the triple
$$
\varepsilon_\sigma(\mathscr S):=(\varepsilon_\sigma(S^p),\varepsilon_\sigma(\Sigma),\varepsilon_\sigma(\mathbf A)).
$$
\end{lemma}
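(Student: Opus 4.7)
The plan is to verify the three ingredients $S^p$, $\Sigma$, $\mathbf A$ separately, using Lemma~\ref{Akhiezerlemma} as the common input. First, wonderfulness of $\sigma(H)\subset G$ is immediate from Proposition~\ref{wonderfulness}, so $\varepsilon_\sigma(\mathscr S)$ is at least a well-defined candidate spherical system. The overall strategy is that the automorphism
$$
\tau\colon G\longrightarrow G,\qquad g\longmapsto n_\bullet\sigma(g)n_\bullet^{-1},
$$
preserves $B$ by Theorem~\ref{autoAkhiezer}(ii); moreover its induced action on $\mathcal X(T)$ (hence on $S$) coincides with $\varepsilon_\sigma$. Indeed, by Theorem~\ref{autoAkhiezer}(iii) one has $\varepsilon_\sigma=w_\bullet\circ\sigma^\top$ on all of $\mathcal X(T)$, and the equality $\tau^\top=w_\bullet\circ\sigma^\top$ follows from a direct computation together with the fact that $\sigma^\top$ preserves~$S_0$ and therefore commutes with $w_\bullet$. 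This one observation, combined with Lemma~\ref{Akhiezerlemma}, will drive the three identifications.

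For the spherical roots, Lemma~\ref{Akhiezerlemma}(2) gives $\mathcal V(G/\sigma(H))=\varepsilon_\sigma(\mathcal V)$; since $\varepsilon_\sigma$ is a $\mathbb Z$-linear automorphism of $\mathcal X(T)$, it sends a basis of primitive linearly independent defining inequalities of $\mathcal V$ to such a basis for $\varepsilon_\sigma(\mathcal V)$, giving $\Sigma_{\sigma(X)}=\varepsilon_\sigma(\Sigma)$. For the distinguished parabolic, Lemma~\ref{Akhiezerlemma}(3) gives
$$
P_{\sigma(X)}=\bigcap_{D'\in\mathcal D(G/\sigma(H))}G_{D'}=\bigcap_{D\in\mathcal D}n_\bullet\sigma(G_D)n_\bullet^{-1}=\tau(P_X).
$$
Since $\tau$ preserves $B$ and induces $\varepsilon_\sigma$ on $S$, the subset of $S$ associated to $P_{\sigma(X)}$ is $\varepsilon_\sigma(S^p)$, as required.

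The remaining and technically most delicate component is $\mathbf A$. Here one must transport the multiset $\mathbf A(\alpha)=\{\rho_D: P_\alpha\cdot D\neq D\}$ through the color correspondence $D\mapsto D'=\mu_\sigma(n_\bullet D)$ of Lemma~\ref{Akhiezerlemma}(3). The point is that the same automorphism $\tau$ of $G$ sends $P_\alpha$ to $P_{\varepsilon_\sigma(\alpha)}$, and transports the $B$-stable divisor $D$ of $G/H$ to $D'$ (up to the canonical identification $G/\sigma(H)\cong G/\tau(H)$ built into $\mu_\sigma$ and $n_\bullet$). Consequently $P_\alpha\cdot D\neq D$ if and only if $P_{\varepsilon_\sigma(\alpha)}\cdot D'\neq D'$, so $\rho_{D'}=\varepsilon_\sigma(\rho_D)$ belongs to $\mathbf A_{\sigma(X)}(\varepsilon_\sigma(\alpha))$ with the correct multiplicity. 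Taking the union over $\alpha\in\Sigma\cap S$ and noting that $\varepsilon_\sigma$ preserves $S$ yields $\mathbf A_{\sigma(X)}=\varepsilon_\sigma(\mathbf A)$.

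The hard part of the argument is this last step, and specifically its bookkeeping. The identification of $\Sigma$ and of $S^p$ is essentially a restatement of Lemma~\ref{Akhiezerlemma}, but showing that the multi-set $\mathbf A$ behaves equivariantly requires verifying that the bijection $D\mapsto D'$ intertwines the $W$-action on colors via simple reflections with the action obtained by twisting through $\varepsilon_\sigma$. Granting that $\tau$ realizes this intertwining, the computation collapses into the three one-line identifications above.
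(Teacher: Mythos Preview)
Your proof is correct and follows the same approach as the paper's own proof, which simply invokes Proposition~\ref{wonderfulness} for wonderfulness and then says the spherical-system statement ``follows readily from Lemma~\ref{Akhiezerlemma} and the recalls made at the beginning of this section.'' You have unpacked that sentence: the introduction of the anti-holomorphic automorphism $\tau(g)=n_\bullet\sigma(g)n_\bullet^{-1}$ is exactly the mechanism already implicit in Lemma~\ref{Akhiezerlemma}(3), and your component-by-component verification of $S^p$, $\Sigma$, and $\mathbf A$ is the intended content of ``follows readily.''
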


\begin{proof}
By Proposition~\ref{wonderfulness}, $\sigma(H)\subset G$ is wonderful.
The assertion on the spherical systems follows readily from Lemma~\ref{Akhiezerlemma} and the recalls made at the beginning of this section.
\end{proof}

A spherical system of $G$ is called \emph{spherically closed} if the corresponding subgroup of $G$ is spherically closed.
Analogously, a wonderful $G$-variety is called \emph{spherically closed} if its spherical system is spherically closed.

\begin{theorem}\label{criterionwithsphericalsystem}
Let $X$ be a spherically closed wonderful $G$-variety with spherical system $\mathscr S$.
There exists a $\sigma$-equivariant real structure on $X$
if and only if  $\mathscr S$ is $\varepsilon_\sigma$-stable.
\end{theorem}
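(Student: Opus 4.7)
Let $H\subset G$ denote the spherically closed subgroup whose wonderful embedding is $X$. The plan is to prove the two implications separately, using Lemma~\ref{sphericalsystemstability} and Corollary~\ref{uniquenessspherclosed} as the main inputs.

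For the direction ``$\Rightarrow$'', suppose $\mu$ is a $\sigma$-equivariant real structure on $X$. Since $\mu$ is $\sigma$-equivariant it permutes $G$-orbits, so the unique open $G$-orbit $X_G^\circ\simeq G/H$ is $\mu$-stable; restricting $\mu$ there yields a $\sigma$-equivariant real structure on $G/H$, which forces $\sigma(H)$ to be $G$-conjugate to $H$. By Lemma~\ref{sphericalsystemstability}, $\sigma(H)$ is wonderful with spherical system $\varepsilon_\sigma(\mathscr S)$; since $G$-conjugate wonderful subgroups share the same spherical system, one concludes $\varepsilon_\sigma(\mathscr S)=\mathscr S$.

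For the direction ``$\Leftarrow$'', assume $\mathscr S$ is $\varepsilon_\sigma$-stable. Combining Lemma~\ref{sphericalsystemstability} with the uniqueness (up to $G$-conjugation) of a wonderful subgroup attached to a prescribed spherical system yields $\sigma(H)=aHa^{-1}$ for some $a\in G$. Corollary~\ref{uniquenessspherclosed}, whose applicability is guaranteed by the spherical closedness of $H$, then produces the unique $\sigma$-equivariant real structure $\mu$ on $G/H$, namely $gH\mapsto \sigma(g)aH$. To upgrade $\mu$ to $X$, I would factor it as
$$
G/H \xrightarrow{\,\mu_\sigma\,} G/\sigma(H) \xrightarrow{\,\phi\,} G/H,
$$
where $\mu_\sigma(gH)=\sigma(g)\sigma(H)$ is the standard anti-holomorphic $\sigma$-equivariant isomorphism and $\phi(g\sigma(H))=gaH$ is a $G$-equivariant biholomorphism. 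The latter extends canonically to a $G$-equivariant biholomorphism between the wonderful compactifications of $G/\sigma(H)$ and $G/H$ by functoriality of the wonderful embedding.

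The crux, and the step I expect to be the main obstacle, is extending $\mu_\sigma$ to an anti-holomorphic morphism between the two wonderful compactifications. The cleanest route is to observe that $\mu$ endows $G/H$ with the structure of an algebraic variety over $\mathbb R$ and to invoke that the formation of the wonderful embedding commutes with the base change $\mathbb R\subset\mathbb C$, thereby descending $X$ itself to a real variety and producing the desired real structure by complexification. Alternatively, realizing $X$ as the closed $G$-orbit in the projectivization of a suitable $\sigma$-self-conjugate simple $G$-module (cf.\ Section~\ref{recallsauto}) allows one to extend $\mu_\sigma$ explicitly via the natural anti-linear map available on that module. Once the extension $\tilde\mu\colon X\to X$ is obtained, its $\sigma$-equivariance and involutivity propagate from the dense open orbit $G/H$ by continuity.
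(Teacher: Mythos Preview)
Your argument is correct and follows essentially the same route as the paper: for ``$\Rightarrow$'' the paper likewise restricts to the open orbit (citing \cite{Ak2} for the resulting conjugacy, which you prove directly), and for ``$\Leftarrow$'' it obtains $\sigma(H)=aHa^{-1}$, applies Corollary~\ref{uniquenessspherclosed}, and then extends $\mu$ to $X$ via uniqueness of the wonderful embedding (deferring the details to \cite{ACF}, whereas you spell out the factorisation $\mu=\phi\circ\mu_\sigma$). One small slip: in your alternative extension route, $X$ is not ``the closed $G$-orbit'' in $\mathbb P(V_X)$ but rather the closure of the image of $G/H$; moreover Proposition~\ref{recall-strict-ppties} is stated for \emph{strict} wonderful varieties, so this alternative would need an extra word to cover the merely spherically closed case.
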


\begin{proof}
Suppose $\varepsilon_\sigma(\mathscr S)=\mathscr S$.
Then by Proposition~\ref{conjugationcriterion}, we have: $\sigma(H)=aHa^{-1}$ for some $a\in G$ and, in turn,
the mapping 
$$
\mu:G/H\rightarrow G/H,\quad gH\mapsto \sigma(g)aH
$$ is well-defined; the involutive property as well as its uniqueness are given by Corollary~\ref{uniquenessspherclosed}.
Moreover, thanks to the uniqueness of the wonderful embedding, $\mu$ can be extended to the whole $X$; see e.g.~\cite{ACF} for details.

Note that a $\sigma$-equivariant real structure on $X$ yields in particular a $\sigma$-equivariant real structure on the open $G$-orbit of $X$.
The converse thus stems from Lemma~\ref{sphericalsystemstability} and Theorem 2.1 in~\cite{Ak2}.
\end{proof}

A wonderful $G$-variety $X$ is called \emph{primitive} if it is not the parabolic induction of a wonderful variety 
(see Subsection~\ref{sectionparabolicinduction} for recollection of this notion)
nor the fiber product of wonderful varieties, meaning that $X$ is not $G$-isomorphic
to a wonderful $G$-variety equals to $X_1\times_{X_3} X_2$ with $X_i$ ($i=1,2,3$) being a wonderful $G$-variety.
By analogy, we call a wonderful subgroup $H\subset G$ \emph{primitive} if its canonical embedding is primitive.

 \begin{theorem}\label{conjugateprimitive}
Let $H\subset G$ be a primitive wonderful subgroup of $G$. 
If none of the spherical roots of $G/H$ is a simple root of $G$ then the subgroups $H$ and 
$\sigma(H)$ of $G$ are conjugate as soon as $(G, H, \sigma)$ is not one of the following triples.
\begin{enumerate}
\item $(SO_{4n},N_G(GL_{2n}), \sigma)$;
\item $(SO_{8},\mathrm{Spin}_7, \sigma)$;
\item $(SO_8, SL_2\cdot Sp_4,\sigma)$
\end{enumerate}
where $\sigma$ in (1), (2) and (3) defines the real form $SO_{p,q}$ with $p\leq q$ and $p,q$ odd.
\end{theorem}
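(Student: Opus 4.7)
The plan is to reduce the statement to a combinatorial check on spherical systems and then carry out a case analysis. By Theorem~\ref{conjugationcriterion}(1) applied to the wonderful subgroup $H$, conjugacy of $H$ with $\sigma(H)$ is equivalent to $\varepsilon_\sigma$-stability of the Luna--Vust invariants of $G/H$; by Lemma~\ref{sphericalsystemstability}, the latter is in turn equivalent to $\varepsilon_\sigma$-stability of the spherical system $\mathscr S = (S_X,\Sigma_X,\mathbf A_X)$ of $G/H$. Hence it suffices to enumerate the primitive wonderful subgroups $H \subset G$ with $\Sigma_X \cap S = \emptyset$ and, for each of them, to decide for which anti-holomorphic involutions $\sigma$ the involution $\varepsilon_\sigma$ of $S$ preserves $\mathscr S$.

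The two inputs I would invoke are, on the one hand, the classification of primitive wonderful varieties (due to Luna, refined by Bravi and Pezzini), which under the hypothesis on $\Sigma_X$ produces an explicit, manageable list of spherical systems organised by the Dynkin type of $G$, and, on the other hand, the Satake classification together with the explicit rule of Section~\ref{recallsauto}: $\varepsilon_\sigma$ equals $-w_\bullet$ on $S_0$ and $\omega$ on $S_1$, giving finitely many possibilities in each Dynkin type. The key structural remark is that $\varepsilon_\sigma$ is always an involutive Dynkin diagram automorphism of $S$; consequently $\varepsilon_\sigma$-stability of $\mathscr S$ is automatic as soon as $\mathscr S$ is stable under every diagram automorphism of $S$, and a direct inspection of the tables shows this to hold for the overwhelming majority of the list.

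The remaining cases come down to those in which $\mathscr S$ is asymmetric with respect to some non-trivial diagram automorphism. The only simple types admitting such an automorphism that play a role in our list are $A_n$, $D_n$ and $E_6$; the $A_n$ and $E_6$ entries that survive the condition $\Sigma_X \cap S = \emptyset$ can be verified to have $\mathscr S$ invariant under the outer involution, so the only genuine asymmetries occur within type $D_n$ and affect the pair of tail simple roots. Going through the $D_n$-entries of the tables, this asymmetry is realised, among primitive wonderful subgroups with no simple spherical root, exactly for the three subgroups listed: $N_G(GL_{2n}) \subset SO_{4n}$, and (via triality of $D_4$) $\mathrm{Spin}_7 \subset SO_8$ and $SL_2 \cdot Sp_4 \subset SO_8$. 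For each of them one checks that $\varepsilon_\sigma$ preserves $\mathscr S$ if and only if $\omega$ fixes both tail vertices of the $D_n$ diagram; by the Satake classification, this fails precisely for the real forms $SO_{p,q}$ with $p \le q$ and both $p,q$ odd, which is exactly the excluded case in the statement.

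The principal difficulty is the completeness of the case check rather than any single conceptual step: one must work with the full tables of primitive spherical systems, screen out those containing a simple spherical root, and confront each surviving entry with every applicable Satake diagram in order to confirm that no further triple beyond the three listed produces an $\varepsilon_\sigma$-unstable $\mathscr S$. Once this bookkeeping is done, the result follows immediately from the combinatorial criterion above.
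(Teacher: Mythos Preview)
Your proposal is correct and follows the same strategy as the paper: reduce conjugacy to $\varepsilon_\sigma$-stability of the spherical system and then run a case-by-case check against a classification list, arriving at the same three $D_n$-type exceptions. The only differences are cosmetic: the paper draws its list directly from the tables of strict wonderful varieties in~\cite{BCF} (already tailored to the hypothesis $\Sigma_X\cap S=\emptyset$) rather than the general Luna/Bravi--Pezzini classification you cite, and it does not isolate your diagram-automorphism shortcut, proceeding instead by direct inspection of each entry against the Satake data.
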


\begin{proof}
The wonderful subgroups $H\subset G$ satisfying the assumptions of the theorem, together with their spherical systems, are listed in~\cite{BCF}.
This enables us to apply the criterion stated in Theorem~\ref{criterionwithsphericalsystem}.
Recall the definition of the automorphism $\varepsilon_\sigma$ stated in Section~\ref{recallsauto}; for convenience,
one may also consult Table 5 in~\cite{O} where $\varepsilon_\sigma$ together with the Satake diagrams are given.
We are thus left to check case-by-case which spherical systems under consideration are $\varepsilon_\sigma$-stable for a given $\sigma$.

We end up with the spherical systems numbered as $(34),(36)$ and $(37)$ in~\cite{BCF}, that is, with the groups
$(SO_8/SL_2\cdot Sp_4)$,$(SO_{8},\mathrm{Spin}_7)$ and $(SO_{4n},N_G(GL_{2n}))$.
Indeed, their spherical roots are 
$\{2\alpha_1,2\alpha_2,\alpha_3+\alpha_4\}$, $\{2\alpha_1+2 \alpha_2+\alpha_3+\alpha_4\}$ and 
$\{\alpha_1+2 \alpha_2+\alpha_3, \ldots, \alpha_{n-3}+2 \alpha_{n-2}+\alpha_{n-1}, 2\alpha_n\}$ respectively.
Note that these sets are not $\varepsilon_\sigma$-stable if $\sigma$ is the involution stated in the theorem.
\end{proof}

\begin{remark}\label{remark-strictw'ful}
Wonderful $G$-varieties whose points have a self-norma\-lizing stabilizer satisfy the assumption on the spherical roots made in the above theorem, that is,
none of their spherical roots is a simple root of $G$; see \cite{BCF} for details.
\end{remark}

\begin{corollary}
Let $G$ be a simple group and $X$ be an affine homogeneous spherical $G$-variety with weight monoid $\Gamma$.
Let $d$ denote the codimension of a generic orbit of the identity component of $G$ on $X$.
If $X$ can be equipped with a $\sigma$-equivariant real structure then $d=\mathrm{rk}\,\Gamma$.
\end{corollary}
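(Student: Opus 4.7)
My plan is to combine Theorem~\ref{conjugationcriterion}(2) with a dimension count for the action of the real form $K:=(G^\sigma)^\circ$ on the complex manifold $X$. I interpret $d$ as the real codimension in $X$ of a generic $K$-orbit, and the target is to prove $d=\mathrm{rk}\,\Gamma$.

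First I would invoke Theorem~\ref{conjugationcriterion}(2): since $X$ admits a $\sigma$-equivariant real structure $\mu$, its weight monoid $\Gamma$ is $\varepsilon_\sigma$-stable and $\sigma(H)$ is $G$-conjugate to $H$. After conjugating $H$ appropriately, $H$ and $K$ can be placed in compatible position, so that $K$ acts on $X$ preserving $X^\mu$ whenever the latter is nonempty.

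Next I would exploit the affine spherical structure. The decomposition $\mathbb{C}[X]=\bigoplus_{\lambda\in\Gamma}V_\lambda$ is multiplicity-free, and highest-weight vectors $v_\lambda\in V_\lambda$ generate a polynomial subalgebra of Krull dimension $r:=\mathrm{rk}\,\Gamma$. The $\varepsilon_\sigma$-stability of $\Gamma$ allows one to couple each $v_\lambda$ with a $\mu$-counterpart in $V_{\varepsilon_\sigma(\lambda)}$, so that appropriate pairings yield $K$-invariant real-analytic functions on $X$; jointly these have transcendence degree $r$ over $\mathbb{R}$.

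The final step is to identify this transcendence degree with the real codimension of the generic $K$-orbit via a polar/Cartan-type decomposition. The idea is to construct a real abelian subgroup $A\subset G$ of real dimension $r$, compatible with $\sigma$, such that $X=K\cdot A\cdot x_0$ is dense and generic $K$-orbits meet $A\cdot x_0$ in a finite (``little Weyl group'') orbit. This yields $d=\dim_{\mathbb{R}} A=r$.

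The hardest part is establishing the compatible Cartan decomposition in this last step. For symmetric $X$ this is Kostant--Rallis; in the general affine spherical case I would adapt Knop's abstract Cartan/little Weyl group construction to the $\sigma$-twisted setting, using the $\varepsilon_\sigma$-stability of $\Gamma$ to produce a real form of the complex Cartan subspace and to verify that the resulting $K$-slice exhausts $X$ up to a finite group.
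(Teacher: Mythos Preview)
Your route is entirely different from the paper's and, as written, contains a real gap at the final step.

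The paper's argument is a two-line contrapositive that avoids any direct codimension computation. It invokes a classification already obtained by Akhiezer (Theorem~6.4 in~\cite{Ak}): for $G$ simple and $X=G/H$ affine spherical, the triples $(G,H,\sigma)$ with $d\neq\mathrm{rk}\,\Gamma$ form an explicit finite list. One then checks that for each such triple the pair $(G,N_G(H))$ is one of the three exceptions of Theorem~\ref{conjugateprimitive}; hence $N_G(H)$ and $\sigma(N_G(H))=N_G(\sigma(H))$ are not $G$-conjugate, so neither are $H$ and $\sigma(H)$, and no $\sigma$-equivariant real structure can exist on $X$. Simplicity of $G$ and Theorem~\ref{conjugateprimitive} are used essentially; your proposal uses neither.

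The problem with your direct approach is Step~4. The ``polar/Cartan-type'' decomposition $X=\overline{K\cdot A\cdot x_0}$ with $\dim_{\mathbb R}A=\mathrm{rk}\,\Gamma$ and finite-fibered slices is precisely the content one is trying to establish; it is not available as a black box for general affine spherical $X$ and arbitrary $\sigma$. Knop's little Weyl group and flats are holomorphic objects attached to $G/H$; producing from them a \emph{real} abelian slice $A$ compatible with $\sigma$ and proving that generic $K$-orbits meet it in a finite set is exactly what Akhiezer does in~\cite{Ak} --- and he shows that this fails in the exceptional triples, where one genuinely has $d<\mathrm{rk}\,\Gamma$. So your argument must use the $\varepsilon_\sigma$-stability of $\Gamma$ at this very step to rule those cases out, but you have not indicated any mechanism by which it enters the construction of $A$ or the finiteness of the intersection. ``Adapting Knop's construction to the $\sigma$-twisted setting'' is a research programme, not a proof step; until that is carried out, the proposal is a sketch of an alternative proof of Akhiezer's theorem rather than a proof of the corollary.
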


\begin{proof}
Suppose that $d\neq\mathrm{rk}\,\Gamma$.
Write $X=G/H$. The triples $(G,H,\sigma)$ are given in Theorem 6.4 in~\cite{Ak}.
One thus observes that $(G,N_G(H),\sigma)$ are exactly the  triples stated in Theorem~\ref{conjugateprimitive}.
Therefore, for such triples, the subgroups $N_G(H)$ and $N_G(\sigma(H))=\sigma(N_G(H))$ of $G$ are not conjugate hence neither are 
the subgroups $H$ and $\sigma(H)$ of $G$.
We conclude the proof of the corollary by invoking Theorem~\ref{criterionwithsphericalsystem}.
\end{proof}

The importance of the assumptions we made in the previous statements is reflected in the following examples.

\begin{example}
Let $G=SL_{n+1}$ with ($n\geq 2$) and $P\subset G$ be the standard parabolic subgroup associated to the simple roots $\alpha_1$ and $\alpha_2$.
Consider the  variety $X=G\times_P X'$ with $X'$ being the $SL_3$-variety $\mathbb P^2\times(\mathbb P^2)^*$.
The varieties $X$ and $X'$ have a single spherical root, namely the root $\alpha_1+\alpha_2$.
Let $n$ be odd and $\sigma$ define the quasi-split but non-split real form of $G$.
If $n>2$  then the spherical system of $X$ is not $\varepsilon_\sigma$-stable since $\varepsilon_\sigma(\alpha_1+\alpha_2)=\alpha_{n-1}+\alpha_n$.
Note that here $\sigma(P)$ and $P$ are not conjugated subgroups of $G$.
\end{example}

\begin{example}
Let $G=SL_4$ with $\sigma$ defining the quasi-split but non-split real form of $G$.
Consider the spherical system of $G$ given by the triple $(\emptyset, \{\alpha_1,\alpha_2+\alpha_3\},\emptyset)$. 
This spherical system is not $\varepsilon_\sigma$-stable since $\varepsilon_\sigma(\alpha_1)=\alpha_3$.
The associated wonderful $G$-variety equals $X=X_1\times_{G/P} X_2$ where
$P$ is the standard parabolic subgroup of $G$ associated to the simple root $\alpha_3$,
$X_1$ (resp. $X_2$) is the parabolic induction of $\mathbb P^1\times\mathbb P^1$ (resp. $\mathbb P^2\times(\mathbb P^2)^*$) from the parabolic subgroup of $G$
with Levi subgroup of semisimple part $SL_2(\alpha_1)$ (resp. $SL_3(\alpha_2,\alpha_3)$).
\end{example}

\begin{remark}
A spherical subgroup $H$ of $G$ whose spherical closure has a $\varepsilon_\sigma$-stable spherical system
may not be conjugate to $\sigma(H)$, as the following example shows.
\end{remark}

\begin{example}
Let $G=SL_3$ with $\sigma$ defining the compact real form of $G$.
Consider the standard $G$-module $\mathbb C^3$ equipped its canonical basis $\{e_1, e_2, e_3\}$.
Let $V$ (resp. $H$) denote the line generated by (resp. the stabilizer in $G$ of) $e_3$.
The fiber bundle  $X=G\times_H V$ is thus a spherical affine $G$-variety
whose spherical system $(\emptyset, \{\alpha_1+\alpha_2\},\emptyset)$ is $\varepsilon_\sigma$-stable.
The coordinate ring of $X$ equals $\oplus_\Gamma V(\lambda)$ where $\Gamma=\mathbb N (\omega_1+\omega_2)+\mathbb N \omega_2$.
Since the weight $\omega_2$ is mapped to $\omega_1$ by $\varepsilon_\sigma$, the monoids $\Gamma$ and $\sigma(\Gamma)$ are distinct.
It follows from Lemma~\ref{Akhiezerlemma} along with Losev's theorem recalled in Subsection~\ref{anysphericalgp}	
that the generic stabilizer $H_0$ of the variety $X$ is not conjugate to $\sigma(H_0)$.
\end{example}

\subsection{Quantitative properties of real structures}

Throughout this subsection, $X$ denotes  a wonderful $G$-variety equipped with a $\sigma$-equiva\-riant real structure $\mu_\sigma$ (also denoted simply by $\mu$ when
no confusion can arise).
We also assume that $X$ is \emph{strict}, namely that all points of $X$ have a self-normalizing stabilizer in $G$.

We keep the notation and the terminology set up in Section~\ref{recall-color}; furthermore,
by $D_X$ we denote the divisor of $X$ equal to the sum of the colors of $X$ and by $\omega_X$ the $B$-weight associated to $D_X$.

By considering the simple $G$-module of highest weight $\omega_X$ as a submodule of the module of global sections $H^0(X,\mathcal O_X(D_X))$, Pezzini
gets the following important feature of strict wonderful varieties.

\begin{proposition}~\label{recall-strict-ppties}
There exists a unique $G$-equivariant embedding of $X$ in the projectivization of the simple $G$-module of highest
weight $\omega_X$.
\end{proposition}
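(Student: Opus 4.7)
The plan is to construct the embedding as the one associated with the complete linear system of $D_X$, then cut down to the simple submodule $V\cong V(\omega_X)\subset H^0(X,\mathcal O_X(D_X))$ generated by the canonical $B$-semi-invariant section $s$ defining $D_X$. I would first recall the classical fact that on any wonderful variety the sum of the colors is an ample effective divisor; since $s$ has $B$-weight $\omega_X$, the $G$-module it generates inside $H^0(X,\mathcal O_X(D_X))$ is forced to be simple of highest weight $\omega_X$, producing the distinguished copy $V\cong V(\omega_X)$ used in the hypothesis.

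Next, I would show that $V$ is base-point free and thus defines a $G$-equivariant morphism $\varphi\colon X\to\mathbb P(V^*)$. The base locus is $G$-stable, closed, and disjoint from the open orbit $X_G^\circ$ since $s$ does not vanish there; an inductive argument on each closed $G$-orbit of $X$ — which is again wonderful and whose colors are the restrictions of those of $X$ — shows that the base locus is empty. Injectivity of $\varphi$ is where the strict hypothesis enters: the stabilizer of $\varphi(x)$ sits between $G_x$ and $N_G(G_x)$, and the equality $N_G(G_x)=G_x$ forces $\varphi$ to separate $G$-orbits; ampleness of $D_X$ together with a tangent-separation argument along the closed $G$-orbits then upgrades $\varphi$ to a closed immersion.

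For uniqueness, any $G$-equivariant embedding $\psi\colon X\hookrightarrow\mathbb P(W)$ with $W$ simple of highest weight $\omega_X$ pulls $\mathcal O(1)$ back to a $G$-linearized very ample line bundle on $X$. By Lemma~\ref{weight-colors} the Picard group of $X$ together with its canonical $G$-linearizations is freely generated by the couples $(\omega_D,\chi_D)$, so matching highest weights forces the pulled-back bundle to be $\mathcal O_X(D_X)$ with its canonical linearization; the embedding $\psi$ is then determined by the unique simple submodule of highest weight $\omega_X$ in $H^0(X,\mathcal O_X(D_X))$ up to the scalar action of $\mathrm{Aut}_G(V(\omega_X))=\mathbb C^\times$, and therefore coincides with $\varphi$. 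The step I expect to be the main obstacle is the closed-immersion claim: base-point freeness combined with orbit-injectivity does not automatically produce a closed immersion, and one must carefully combine strictness with the wonderful structure along the boundary divisors in order to control the differential of $\varphi$ on the closed $G$-orbits.
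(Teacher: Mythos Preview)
Your outline is correct in spirit and essentially rederives Pezzini's Theorem~5, which is precisely what the paper invokes. The paper's own proof is a two-line citation: $D_X$ is ample by Brion~\cite{B1}, and the existence and uniqueness of the embedding then follow directly from Theorem~5 in~\cite{Pe}. So where you build the morphism from the simple submodule generated by the canonical section, argue base-point freeness orbit by orbit, and use strictness to force injectivity and then a closed immersion, the paper simply defers to the published result. Two details in your sketch would need care if carried out in full: the claim that the stabilizer of $\varphi(x)$ is sandwiched between $G_x$ and $N_G(G_x)$ is the crux but is not automatic---one has to compare the spherical data of the source and image orbits, which is exactly what Pezzini's notion of a \emph{faithful} divisor encodes---and in the uniqueness step you implicitly use that $D\mapsto\omega_D$ is injective on colors, which can fail for general wonderful varieties but is guaranteed by strictness. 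The obstacle you flag, upgrading orbit-injectivity to a closed immersion, is indeed the substantive step, and the paper sidesteps it entirely by citing the finished theorem.
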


\begin{proof}
The divisor $D_X$ being ample (thanks to~\cite{B1}), the proposition is a consequence of Theorem 5 in~\cite{Pe}.
\end{proof}

\subsection{}
Recall the definition of the complex conjugate module $V^\sigma$ of a given simple $G$-module $V$ as well as its Cartan index; see Section~\ref{recallsauto}.
The following statement is the generalization of Proposition~5.1 in~\cite{ACF} to an arbitrary involution $\sigma$.

\begin{proposition}\label{proposition-embbeding}
The simple $G$-module $V_X$ of highest weight $\omega_X$ sa\-tisfies the following properties.

\begin{enumerate}
 \item 
The $G$-modules $V_X$ and $V_X^\sigma$ are isomorphic.
\item
There exist an  anti-linear automorphism $\nu: V_X\rightarrow V_X$ and a $G$-equivariant embedding $\varphi: X\rightarrow \mathbb P(V_X)$
 such that
\begin{enumerate}
\item 
$\nu(\rho(g)v)=\rho(\sigma(g)v)$ for every $(g,v)\in G\times V_X$.
\item
$\varphi\circ\mu=\nu\circ\varphi$.
\end{enumerate}
\end{enumerate}
\end{proposition}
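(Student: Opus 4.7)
The strategy is to derive~(1) from the combinatorial description of the weight $\omega_X$, and then to obtain~(2) by combining Schur's lemma with the uniqueness of the embedding $\varphi$ provided by Proposition~\ref{recall-strict-ppties}.

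For~(1), the existence of a $\sigma$-equivariant real structure $\mu$ on $X$ forces, via Theorem~\ref{criterionwithsphericalsystem}, the spherical system of $X$ to be $\varepsilon_\sigma$-stable. In particular $\varepsilon_\sigma$ permutes the set of colors $\mathcal{D}$; applying Lemma~\ref{Akhiezerlemma} to $\omega_X=\sum_{D\in\mathcal{D}}\omega_D$ then yields $\varepsilon_\sigma(\omega_X)=\omega_X$. By Theorem~\ref{autoAkhiezer}(iii) the highest weight of $V_X^\sigma$ equals $\varepsilon_\sigma(\omega_X)=\omega_X$, whence $V_X^\sigma\simeq V_X$ as $G$-modules.

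Such an isomorphism is precisely an anti-linear automorphism $\nu$ of $V_X$ satisfying the intertwining relation~(a); by Schur's lemma it is unique up to a non-zero complex scalar and $\nu^2=c\,\mathrm{Id}$ for some $c\in\mathbb{C}^\times$. As recalled after Theorem~\ref{autoAkhiezer}, the scalar $c$ is in fact real, so after rescaling $\nu$ we may assume $\nu^2=\pm\mathrm{Id}$; in either case $\nu$ descends to an honest anti-holomorphic involution $[\nu]$ of $\mathbb{P}(V_X)$. Letting $\varphi:X\hookrightarrow\mathbb{P}(V_X)$ be the unique $G$-equivariant embedding of Proposition~\ref{recall-strict-ppties}, form
$$
\psi := [\nu]^{-1}\circ\varphi\circ\mu : X\longrightarrow\mathbb{P}(V_X).
$$
The anti-holomorphies of $\mu$ and $[\nu]^{-1}$ cancel, so $\psi$ is holomorphic; the $\sigma$-equivariance of $\mu$ together with~(a) makes $\psi$ $G$-equivariant; and as a composition of injective maps, $\psi$ is itself an embedding. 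The uniqueness clause of Proposition~\ref{recall-strict-ppties} then forces $\psi=\varphi$, which rearranges to property~(b).

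The delicate point is the normalisation of $\nu$ so that $[\nu]$ is a genuine involution of $\mathbb{P}(V_X)$: this relies on the reality of the scalar $c$, i.e.\ on the Cartan-index discussion of Section~\ref{recallsauto}, which is the only ingredient here not coming directly from the spherical-system machinery. Once this normalisation is in place, the remainder is merely a matter of unwinding the uniqueness of the embedding $\varphi$.
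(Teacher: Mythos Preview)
Your argument is correct and follows the same route as the paper: the paper's own proof of (1) likewise deduces $\varepsilon_\sigma(\omega_X)=\omega_X$ from the $\varepsilon_\sigma$-stability of the colors and then invokes Theorem~\ref{autoAkhiezer}(iii); for (2) it simply says ``derived from (1) along with Proposition~\ref{recall-strict-ppties}'', which unpacks to exactly the uniqueness trick you wrote out with $\psi=[\nu]^{-1}\circ\varphi\circ\mu$.

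One small remark: your final paragraph about the ``delicate point'' is unnecessary. The proposition does not require $[\nu]$ to be an involution on $\mathbb{P}(V_X)$, only that $\nu$ be an anti-linear automorphism satisfying (a) and (b). Any such $\nu$ already descends to a well-defined anti-holomorphic diffeomorphism of $\mathbb{P}(V_X)$ with anti-holomorphic inverse, so your construction of $\psi$ and the appeal to the uniqueness clause of Proposition~\ref{recall-strict-ppties} go through without rescaling $\nu$ and without ever mentioning the Cartan index. The reality of $c$ becomes relevant only later, in Corollary~\ref{criterion-upto-Cartan-index}.
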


\begin{proof}
By assumption, $X$ has a $\sigma$-equivariant real structure. In particular, the set of colors of $X$ is $\varepsilon_\sigma$-stable
 and, in turn, $\varepsilon_\sigma (\omega_X)=\omega_X$. Thanks to Theorem~\ref{autoAkhiezer}, we obtain (1).
The second assertion can be derived from (1) along with Proposition~\ref{recall-strict-ppties}.
\end{proof}

\begin{remark}
Invoking the $\varepsilon_\sigma$ stability of the set of colors of $X$ as above, one can prove that $\mu(D_X)$ is rationally equivalent to $D_X$.
\end{remark}

\begin{corollary}\label{criterion-upto-Cartan-index}
If $X^\mu\neq\emptyset$ then the Cartan index of the simple $G$-module of highest weight $\omega_X$ is positive.
\end{corollary}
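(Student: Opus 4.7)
The plan is to exploit the embedding $\varphi : X \hookrightarrow \mathbb{P}(V_X)$ together with the anti-linear map $\nu : V_X \to V_X$ provided by Proposition~\ref{proposition-embbeding}, and to read off the sign of the Cartan index directly from the existence of a real point.

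First I would pick a real point $x \in X^{\mu}$ and transport it to projective space: set $p = \varphi(x) \in \mathbb{P}(V_X)$. From $\varphi \circ \mu = \nu \circ \varphi$ and $\mu(x) = x$, the induced anti-holomorphic involution of $\mathbb{P}(V_X)$ (still denoted $\nu$) fixes $p$. Choose any lift $v \in V_X \setminus \{0\}$ with $[v] = p$. The fact that $\nu$ fixes the line $\mathbb{C}v$ then yields a scalar $\lambda \in \mathbb{C}^{\times}$ with
$$
\nu(v) = \lambda v.
$$

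Next I would apply $\nu$ once more and use anti-linearity:
$$
\nu^{2}(v) = \nu(\lambda v) = \overline{\lambda}\,\nu(v) = \overline{\lambda}\lambda\, v = |\lambda|^{2}\, v.
$$
By the definition of the Cartan index (recalled after Theorem~\ref{autoAkhiezer}), $\nu^{2} = c \cdot \mathrm{Id}_{V_X}$ with $c \in \mathbb{R}^{\times}$ whose sign is independent of the choice of $\nu$. Comparing the two expressions forces $c = |\lambda|^{2} > 0$, so the Cartan index of $V_X$ is positive.

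There is no real obstacle here: once Proposition~\ref{proposition-embbeding} is in hand, the statement reduces to the elementary observation that an anti-linear involution fixing a line in $V_X$ must square to multiplication by a positive real number, because of the conjugation appearing when pulling scalars through $\nu$. The only mild subtlety is that $\nu$ itself is not literally an involution (only $\nu^{2}$ is scalar), but this is exactly what the Cartan-index sign is designed to measure, and the computation above is insensitive to the normalization of $\nu$ since rescaling $\nu$ by $t \in \mathbb{C}^{\times}$ rescales $c$ by $|t|^{2}$.
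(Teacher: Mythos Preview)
Your proof is correct and follows essentially the same approach as the paper: use Proposition~\ref{proposition-embbeding} to transport a real point to a $\nu$-fixed line in $\mathbb{P}(V_X)$, lift to a vector $v$ with $\nu(v)=\lambda v$, and compute $\nu^2(v)=|\lambda|^2 v$ to conclude positivity of the Cartan index. Your write-up is in fact a bit more explicit about the anti-linearity step than the paper's own proof.
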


\begin{proof}
Since $X^\mu\neq\emptyset$, there exists $x\in X$ such that $\mu(x)=x$ and in turn, $\nu([v])=[v]$ with $\varphi(x)=[v]$ thanks to Proposition~\ref{proposition-embbeding}.
In particular $\nu(v)=a v$ for some $a\in\mathbb C^\times$ and $\nu^2(v)=a\bar{a}v$.
The assertion of the proposition thus follows from the definition of the Cartan index.
\end{proof}

\begin{remark}
Let $P$ be the parabolic subgroup of $G$ containing $B$
and such that $G/P$ is isomorphic to the closed $G$-orbit of $X$.
By \cite{BLV}, there exists a quasi-affine subvariety $Z_0$ of $X$ such that the open $B$-orbit of $X$ is isomorphic to $P^u\times Z_0$
where $P^u$ denotes the unipotent radical of $P$.
In~\cite{ACF}, it is proved that $Z_0$  contains real points w.r.t. $\mu_\sigma$ with $\sigma$ defining the real form of $G$.
Based on this statement as well as on Proposition~5.1 in loc. cit., the authors give an upper bound of the $G^\sigma$-orbits of $X^{\mu\sigma}$.
The author of the present article does not know if the aforementioned property on $Z_0$ generalizes properly to any involution $\sigma$, that is, 
if $Z_0$ contains real points w.r.t. $\mu_\sigma$ whenever 
so does $X$.
\end{remark}

From the definition of wonderful varieties, the $G$-orbits of $X$ are indexed by the subsets of $\{1,\ldots,r\}$ or equivalently by the subsets of $\Sigma$.
see Section~\ref{recallsauto}.
Further, given $I\subset\{1,\ldots,r\}$, the closure of the corresponding $G$-orbit within $X$ is a wonderful $G$-variety $X_I$.
Specifically, we have
$$
X_I=\bigcap_{i\in I} D_i
$$
and the spherical system of $X_I$ is $(S^p,\Sigma_I, \mathbf A_I)$
where 
$$
\Sigma_I=\{\gamma_i\in\Sigma: i\not\in I\}
$$ 
and $\mathbf A_I$ stands for the union of the $\mathbf A(\alpha)$'s such that $\alpha\in \Sigma_I$; see e.g Subsection 1.2 in~\cite{BL} for details.
Moreover, $X_I$ is obtained by parabolic induction from the parabolic subgroup $P_I$ of $G$
containing $B^-$ and associated to the set of  simple roots 
$$
S_I=S^p\cup \mathrm{Supp}\Sigma_I.
$$
Here $ \mathrm{Supp}\Sigma_I$ denotes the support of $\Sigma_I$, that is the subset of $S$ defined by the $\alpha$'s such that there exists $\gamma\in\Sigma_I$ with
$\gamma=\sum_{\beta\in S} a_\beta \beta$ and $a_\alpha\neq 0$.

\begin{theorem}\label{Thm-reallocus}
Let $X$ be a spherically closed wonderful $G$-variety endowed with a $\sigma$-equivariant real structure $\mu$.
Let $r$ denote the rank of $X$ and $(S^p,\Sigma,\mathbf A)$ be its spherical system.
The real points of $X$ are located on its $G$-orbits $G\cdot x_I$ $(I\subset \{1,\ldots, r\})$ such that
\begin{enumerate}
 \item $\Sigma_I=\varepsilon_\sigma(\Sigma_I)$ and 
\item $S_0\subset S_I$.
\end{enumerate}
In particular, if $\sigma$ defines the compact real form of $G$ then the real points w.r.t. $\mu$ are located on the open $G$-orbit of $X$.
\end{theorem}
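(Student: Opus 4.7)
The plan is to localise the question to each orbit closure $X_I$ and apply the two existence criteria proved earlier, Theorem~\ref{criterionwithsphericalsystem} and Proposition~\ref{realpointparabolicinduction}. Suppose $y\in X^\mu\cap(G\cdot x_I)$. By the $\sigma$-equivariance of $\mu$ one has $\mu(G\cdot x_I)=G\cdot\mu(x_I)$, and since $y=\mu(y)$ lies in both orbits they coincide; hence $X_I=\bigcap_{i\in I}D_i$ is $\mu$-stable and $\mu|_{X_I}$ is a $\sigma$-equivariant real structure on the wonderful $G$-variety $X_I$. Because every stabiliser occurring in $X_I$ is already self-normalising in $G$, strictness of $X$ passes to $X_I$, which is therefore itself spherically closed wonderful with spherical system $(S^p,\Sigma_I,\mathbf A_I)$. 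Theorem~\ref{criterionwithsphericalsystem} then yields the $\varepsilon_\sigma$-stability of this triple and, in particular, condition~(1): $\Sigma_I=\varepsilon_\sigma(\Sigma_I)$.

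For condition~(2) I would exploit the parabolic induction $X_I=G\times_{P_I}X'_I$, where $P_I\supset B^-$ has type $S_I=S^p\cup\mathrm{Supp}\,\Sigma_I$. Since~(1) makes $S_I$ invariant under $\varepsilon_\sigma$, the parabolic $\sigma(P_I)$ is $G$-conjugate to $P_I$, so the hypotheses of Lemma~\ref{parabolicinduction} are fulfilled. By the uniqueness statement of Corollary~\ref{uniquenessspherclosed}, $\mu|_{X_I}$ agrees with the real structure produced by Lemma~\ref{parabolicinduction}, and Proposition~\ref{realpointparabolicinduction} then transports a real point of $X_I$ to a real point of $G/P_I$ for the twisted real structure $gP_I\mapsto\sigma(g)nP_I$. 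The classical Satake description of parabolic subgroups of $G$ defined over the real form of $\sigma$ characterises those standard $P_I$ for which $G/P_I$ admits such a real point as precisely those with $S_0\subset S_I$ and $\omega(S_I\cap S_1)=S_I\cap S_1$; the second condition is already ensured by~(1), whence $S_0\subset S_I$.

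For the compact real form one has $S_0=S$, so (2) degenerates to $P_I=G$. Since the compact subgroup $K=G^\sigma$ acts transitively on every flag variety $G/P$ and commutes with every $\sigma$-equivariant real structure (as $\sigma|_K=\mathrm{id}$), a $\mu$-fixed point on $G/P$ with $P\neq G$ would propagate under $K$ to the whole of $G/P$, forcing $\mu$ to be the identity, which is impossible for an anti-holomorphic involution on a positive-dimensional complex manifold. Combined with the orbit-closure structure of the strict wonderful variety, this pins the real points down to the open $G$-orbit. The most delicate step in this plan is the second paragraph: one must both identify the real structure induced on $G/P_I$ from $\mu|_{X_I}$ with the twisted form appearing in Proposition~\ref{realpointparabolicinduction}, and then translate the geometric statement that $G/P_I$ carries a $\sigma$-equivariant real point into the purely combinatorial inclusion $S_0\subset S_I$ via the Satake classification of real parabolic subgroups.
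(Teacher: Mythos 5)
Your argument is correct and follows essentially the same route as the paper: restrict $\mu$ to the ($\mu$-stable) orbit closure $X_I$, apply Theorem~\ref{criterionwithsphericalsystem} to obtain (1), and combine the parabolic-induction description of $X_I$ with the uniqueness statement of Corollary~\ref{uniquenessspherclosed} and with Proposition~\ref{realpointparabolicinduction} to reduce (2) to the existence of a real point on $G/P_I$, which the Satake classification of real parabolic subgroups (equivalently, Theorem~\ref{autoAkhiezer}~(iii), as the paper phrases it) converts into $S_0\subset S_I$. Two minor deviations are worth noting: the paper deduces the spherical closedness of $X_I$ from Section 2.4 of~\cite{BP} rather than from strictness (the theorem's stated hypothesis is spherical closedness, under which your ``stabilisers are self-normalising'' justification is not available); and in the compact case your final inference, from ``(2) forces $P_I=G$'' to ``$I=\emptyset$'', is not automatic, since $S^p\cup\mathrm{Supp}\,\Sigma_I$ can equal $S$ for a proper subset $\Sigma_I$ of $\Sigma$ --- though the paper itself offers no further detail on this ``in particular'' either.
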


\begin{proof}
Given $I\subset\{1,\ldots,r\}$, consider the corresponding $G$-orbit.
Suppose this orbit has a real point w.r.t $\mu$ then obviously so does its closure $X_I$ within $X$.

As recalled, $X_I$ is a wonderful $G$-variety whose set of spherical roots equals $\Sigma_I$.
Therefore, this set has to be $\varepsilon_\sigma$-stable by Theorem~\ref{criterionwithsphericalsystem}.
This proves the assertion stated in (1).

To prove that condition (2) has to be satisfied, recall that $X_I$ is parabolically induced from $P_I$.
Further, $X_I$ is also spherically closed; see Section 2.4 in~\cite{BP}.
By the uniqueness statement (Proposition~\ref{uniquenessspherclosed}), the real structure of $X_I$ is that described in Lemma~\ref{parabolicinduction}.
We can thus apply Proposition~\ref{realpointparabolicinduction}.
In particular, $G/P_I$ has a real point w.r.t. the real structure $gP_I\mapsto \sigma(g)P_I$.
This together with the last assertion of Theorem~\ref{autoAkhiezer} implies (2).
The theorem follows.
\end{proof}

A converse of the above theorem reads as follows.

\begin{proposition}\label{pptyS}
Let $X$ be a wonderful $G$-variety endowed with a $\sigma$-equivariant real structure $\mu$.
If the set $S_X$ contains the set $S_0$ associated to $\sigma$ then every $\mu$-stable $G$-orbit of $X$ contains real points w.r.t. $\mu$.
\end{proposition}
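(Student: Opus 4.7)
The plan is to isolate a $\mu$-stable $G$-orbit $\mathcal O$ and exhibit a real point via the parabolic-induction structure of its closure $X_I=\overline{\mathcal O}$. Recall that $X_I$ is a wonderful $G$-subvariety of $X$ with spherical system $(S_X,\Sigma_I,\mathbf A_I)$, parabolically induced from the standard parabolic $P_I\supset B^-$ of simple root set $S_I=S_X\cup\mathrm{Supp}\,\Sigma_I$ and fiber a wonderful $L_I$-variety $X_I'$.

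First I would check that $\sigma$ stabilizes both $P_I$ and $L_I$. The $\mu$-stability of $\mathcal O$ forces $\varepsilon_\sigma$-stability of $\Sigma_I$ and hence of $S_I$, while the hypothesis gives $S_0\subset S_X\subset S_I$. Since $w_\bullet\in W_{S_I}$, I would pick a representative $n_\bullet\in N_G(T)$ of $w_\bullet$ lying in $L_I\subset P_I$; combined with Theorem~\ref{autoAkhiezer}(ii), this yields $\sigma(P_I)=P_I$, and the $\sigma$-stability of $T\subset L_I$ gives $\sigma(L_I)=L_I$. Lemma~\ref{parabolicinduction} applied with $n=e$ then identifies the real structure of $X_I=G\times_{P_I}X_I'$ as $[g,x]\mapsto[\sigma(g),\mu'(x)]$, so any $\mu'$-fixed point $x_0$ in the open $L_I$-orbit of $X_I'$ produces the desired real point $[e,x_0]\in\mathcal O$.

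Next I would produce such an $x_0$. The $L_I$-wonderful variety $X_I'$ satisfies the same hypothesis as $X$: its spherical system has first component $S_X$, and the black root set of $\sigma|_{L_I}$ on $L_I$ equals $S_0\cap S_I=S_0$, still contained in $S_X$. One may thus recurse on the semisimple rank of the ambient reductive group. The recursion terminates on a cuspidal wonderful variety (where $S_I$ equals the full set of simple roots, so $P_I=G$ and the induction is trivial), and this cuspidal base case is the main obstacle. To handle it I would employ the embedding $X\hookrightarrow\mathbb P(V_X)$ of Proposition~\ref{recall-strict-ppties} together with Proposition~\ref{proposition-embbeding}: the hypothesis $S_0\subset S_X$ makes $\omega_X$ orthogonal to every simple root of $S_0$ and hence $w_\bullet$-fixed, so the antilinear involution $\nu$ preserves the highest-weight line $[v_{\omega_X}]$ with positive Cartan index, yielding a real point in the closed $G$-orbit; propagating this to real points in each $\mu$-stable $G$-orbit requires a more delicate argument, for instance by using strictness of $X$ to pass to a $T^\sigma$-invariant affine chart around $[v_{\omega_X}]$ and analyzing the remaining $T$-fixed points (one per $G$-orbit) inside it.
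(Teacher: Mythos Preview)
Your route is more convoluted than the paper's and leaves the decisive step as a sketch. The paper works directly on $X$, with no passage to $X_I$ and no recursion: since $S_0\subset S_X$ and the spherical system is $\varepsilon_\sigma$-stable, Theorem~\ref{autoAkhiezer} gives $\sigma(P_X^-)=P_X^-$, so the base point $eP_X^-$ of the closed $G$-orbit $G/P_X^-\subset X$ is $\mu$-fixed (after identifying $\mu|_{G/P_X^-}$ with $gP_X^-\mapsto\sigma(g)P_X^-$ via Corollary~\ref{uniquenessspherclosed}, $P_X^-$ being self-normalizing). The remaining step---propagating a real point from the closed orbit to every $\mu$-stable $G$-orbit---is then handled by invoking Theorem~3.10 of~\cite{ACF}, whose argument carries over verbatim once a real point on the closed orbit is in hand.

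Your reduction to the cuspidal fibre $X_I'$ does not sidestep this propagation problem: after the parabolic-induction step you still need that a real point in the \emph{closed} $L_I$-orbit of $X_I'$ forces one in its \emph{open} $L_I$-orbit, which is exactly the same issue on a smaller group. So the recursion buys nothing. Two further loose ends: (i) your identification of $\mu|_{X_I}$ with the map of Lemma~\ref{parabolicinduction} (taken with $n=e$) is not automatic---it needs the uniqueness of Corollary~\ref{uniquenessspherclosed}, hence that $X_I$ be spherically closed (this holds by strictness of $X$; cf.\ the proof of Theorem~\ref{Thm-reallocus}); (ii) the Cartan-index positivity is a \emph{consequence} of $\nu$ fixing $[v_{\omega_X}]$, not a tool for producing further real points, and your closing suggestion to ``analyze the $T$-fixed points in a $T^\sigma$-invariant chart'' is too vague to count as a proof. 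Either carry out the propagation argument explicitly or, as the paper does, appeal to~\cite{ACF}.
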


\begin{proof}
Recall that the projective $G$-orbit of $X$ is isomorphic to $G/P_X^-$ where $B^-\subset P_X^-\subset G$ is associated to $S_X$.

Since there exists a $\sigma$-equivariant real structure on $X$ by assumption, the spherical system of $X$ is $\varepsilon_\sigma$-stable;
see Theorem~\ref{criterionwithsphericalsystem}.
Furthermore, thanks to the assumption made on $S_X$ along with Theorem~\ref{autoAkhiezer}-(iii), we have: $\sigma(P_X^-)=P_X^-$.
The parabolic subgroup $P_X^-\subset G$ being self-normalizing hence spherically closed,
the real structure of $X$ restricted onto $G/P_X^-$ is (up to an automorphism of $G/P_X^-$) the mapping $gP_X^-\mapsto \sigma(g)P_X^-$;
see Proposition~\ref{uniquenessspherclosed}.
Since, as noticed $\sigma(P_X^-)=P_X^-$, the base point $eP_X^-\in G/P_X^-$ is a real point w.r.t. this mapping.
It follows that the projective $G$-orbit of $X$ contains real points w.r.t. $\mu$.

The rest of the proof just mimics that of Theorem 3.10 in~\cite{ACF}.
\end{proof}

The following examples show that we may encounter various situations for the set of real points of wonderful varieties.

\begin{example}
Let $G$ be of type $\mathsf E_7$ and $H\subset G$ be the normalizer of the stabilizer of a nilpotent element in the adjoint orbit 
of weighted diagram $(0100001)$.
As proved in ~\cite{BCF}, the subgroup $H\subset G$ is spherically closed with set of colors identified with the set of fundamental weights of $G$.
In case $\sigma$ defines the real form of type $\mathsf{EVI}$, the Cartan index of the $G$-module of highest weight equal to the sum of all fundamental weights is $-1$ 
(see Table 5 in~\cite{O}). 
By Corollary~\ref{criterion-upto-Cartan-index}, $G/H$ thus has no real points w.r.t. $\mu_\sigma$.
\end{example}

\begin{example}
 Let $G$ be of type $\mathsf E_8$. Then $\varepsilon_\sigma$ is trivial for every $\sigma$ and every finite dimensional $G$-module is of Cartan index $1$; see e.g. Table 5 in~\cite{O}.
Consider the nilpotent orbit $\mathcal O\subset \mathfrak g:=\mathrm{Lie}\,G$ of weighted diagram $(00000010)$.
Let $H\subset G$ be the stabilizer of $[e]\in \mathbb P(\mathfrak g)$ where $e\in\mathcal O$.
As pointed out in Appendix B of~\cite{BCF}, $H$ is a spherically closed subgroup of $G$.
Since $\varepsilon_\sigma$ is trivial, the spherical system of $G/H$ is $\varepsilon_\sigma$-stable.
Thanks to Theorem~\ref{criterionwithsphericalsystem},  the mapping $\mu_\sigma$ defines a $\sigma$-equivariant real structure on $G/H$.
Further, $S_X=\{\alpha_2,\alpha_3,\alpha_4,\alpha_5\}$; see again~\cite{BCF}.
Therefore $S_X$ fulfills the property of Proposition~\ref{pptyS} whenever $\sigma$ defines the real form  $\mathsf{EVIII}$ or $\mathsf{EIX}$
and in turn $G/H$ has real points w.r.t. $\mu_\sigma$, for these involutions $\sigma$.
This is in accordance with Djokovic's tables (\cite{D}) stating, in particular, that  $\mathcal O\cap \mathfrak g_{\mathbb R}\neq\emptyset$ when
$\mathfrak g_{\mathbb R}$ denotes the real form $\mathsf{EVIII}$ or $\mathsf{EIX}$ of $\mathfrak g$. 
\end{example}

\begin{example}
 Let $G$ be of type $\mathsf E_6$ and let $H\subset G$ be the normalizer of the stabilizer of a nilpotent element in the adjoint orbit $\mathcal O\subset\mathfrak g$
of weighted diagram $(000100)$.
The orbit $\mathcal O$ is spherical and $H\subset G$ is spherically closed; its spherical system is given by the triple
$(\emptyset$, $\Sigma=\{\alpha_1+\alpha_6,\alpha_3+\alpha_5,\alpha_2+\alpha_4\},\emptyset)$; see~\cite{BCF} for details.
Observe that this spherical system is $\varepsilon_\sigma$-stable for every $\sigma$.
Since $S_X=\emptyset$, the assumption of Proposition~\ref{pptyS}  is not fulfilled either for the real forms $\mathsf{EIII, EIV}$ or for the compact form of $\mathsf E_6$.
On the other hand, by Djokovic's tables, we know  that $\mathcal O\cap \mathfrak g_{\mathbb R}=\emptyset$ for the aforementioned real forms of $\mathsf E_6$.
\end{example}

\begin{example}
Let $G=G_1\times G_1$ with $G_1$ being a simple group.
Then the $G$-variety $G_1\simeq G/\mathrm{diag}(G)$ is spherical.
If $G_1$ is adjoint then $G_1\simeq G/\mathrm{diag}(G)$ is even wonderful and  $S_X=\emptyset$, 
so $S_X$ does not always fulfill the condition of Proposition~\ref{pptyS}.
Equip $G$ with the involution $\sigma=(\sigma_1,\sigma_1)$ where $\sigma_1$ is any anti-holomorphic involution of $G_1$.
This case gives an example where there are always real points in $G/H$ w.r.t. $\sigma$, whatever $\sigma_1$ is.
\end{example}


\end{document}